\definecolor{rasp}{rgb}{.89,.04,.36}      
\pgfplotsset{compat=newest}
\newtheorem{theorem}{Theorem}[section]
\newtheorem{proposition}[theorem]{Proposition}
\newtheorem{mydef}[theorem]{Definition}
\newtheorem{remark}{Remark}
\newcommand{\R}{\mathbb{R}}
\newcommand{\G}{\mathcal{G}}
\newcommand{\vecsym}[1]{\boldsymbol{#1}}
\renewcommand{\vec}[1]{\mathbf{#1}}
\newcommand{\revision}[1]{\textcolor{black}{#1}}
\newcommand{\rrevision}[1]{\textcolor{black}{#1}}
\title{A Stabilization of a Continuous Limit of the Ensemble Kalman Inversion}
\date{\today}
\author{
	Dieter Armbruster \medskip\\
	{\small\it School of Mathematical and Statistical Sciences} \\
	{\small\it Arizona State University} \\
	{\small\it Tempe, AZ 85287-1804, USA
	}
	\bigskip \\
	Michael Herty \medskip\\
	{\small\it Institut f\"{u}r Geometrie und Praktische Mathematik (IGPM)} \\
	{\small\it RWTH Aachen University} \\
	{\small\it Templergraben 55, 52062 Aachen, Germany
	}
	\bigskip \\
	Giuseppe Visconti \medskip\\
	{\small\it Dipartimento di Matematica ``G. Castelnuovo''} \\
	{\small\it La Sapienza, Universit\`{a} di Roma} \\
	{\small\it P.le Aldo Moro 5, 00185 Roma, Italy
	}
}
\begin{document}

\maketitle

\begin{abstract}
  The Ensemble Kalman Filter (EnKF) belongs to the class of iterative particle filtering methods and can be used for solving control--to--observable inverse problems. In this context, the EnKF is known as Ensemble Kalman Inversion (EKI). In recent years several continuous limits in the number of iteration and particles have been performed in order to study properties of the method. In particular, a one--dimensional linear stability analysis reveals
  possible drawbacks in the phase space of moments
  provided by the continuous limits of the EKI, but observed also in the multi--dimensional setting.
  In this work we address this issue by introducing a \emph{stabilization} of the dynamics which leads to a method with globally asymptotically stable solutions. We illustrate the performance of the stabilized version by using test inverse problems from the literature and comparing it with the classical continuous limit formulation of the method.
\end{abstract}

\paragraph{Mathematics Subject Classification (2010)}  Dynamical systems, inverse problems, regularization, stabilization, nonlinear filtering methods, moment equations

\paragraph{Keywords}  37N35 (Dynamical systems in control), 65N21 (Inverse problems), 93E11 (Filtering)

\section{Introduction} \label{sec:introduction}

In this paper we investigate a particular numerical method for solving inverse problems, namely, the Ensemble Kalman Inversion (EKI), originally introduced in~\cite{iglesiaslawstuart2013}. This method can be derived in the framework of the Ensemble Kalman Filter (EnKF) as briefly explained later in this introduction and in Section~\ref{sec:preliminary}. While the EnKF has already been introduced more than ten years ago~\cite{Evensen1994,EmerickReynolds,ChenOliver,BergemannReich} as a discrete time method to estimate state variables and parameters of stochastic dynamical systems, the EKI has been recently and successfully applied to solve inverse problems in many research fields due to its derivative--free structure, in particular in oceanography~\cite{EvensenVanLeeuwen1996}, reservoir modeling~\cite{Aanonsen2009}, weather forecasting~\cite{McLaughlin2014}, milling process~\cite{SchwenzerViscontiEtAl2019}, process control~\cite{Teixeira2010}, and also machine learning~\cite{Haber2018NeverLB,KovachkiStuart2019}.

In order to set up the mathematical formulation, we let $\G:X \to Y$ be the given (possible nonlinear) forward operator between \revision{the Euclidean} spaces $X=\R^d$, $d\in\mathbb{N}$, and $Y=\R^K$, $K\in\mathbb{N}$. We are concerned with the following abstract inverse problem or parameter identification problem
\begin{equation} \label{eq:noisyProb}
	\vec{y} = \G(\vec{u}) + \vecsym{\eta}
\end{equation}
aiming to recover unknown control $\vec{u}\in X$ from given observations $\vec{y}\in Y$, where $\vecsym{\eta}$ is observational noise. Typically, $d\gg K$ and $\vecsym{\eta}$ is not explicitly known but only information on its distribution is available. We assume that $\vecsym{\eta} \sim \mathcal{N}(\vec{0},\vecsym{\Gamma}^{-1})$, i.e.~the observational noise is normally distributed with zero mean and given covariance matrix $\vecsym{\Gamma}^{-1}\in\R^{K\times K}$.

Relying on the same machinery leading to the EnKF formulation, the EKI can be derived within the inverse problem framework by rewriting~\eqref{eq:noisyProb} as a partially observed and artificial dynamical system based on state augmentation, e.g.~cf.~\cite{Anderson2001,iglesiaslawstuart2013}. The update formula for each ensemble member is computed by imposing first order necessary optimality conditions to solve a regularized minimization problem, which aims for a compromise between the background estimate of the dynamics model and additional information provided by data model. A similar technique is used to derive the update formula for constrained inverse problems~\cite{Stuart2019CEnKF,HertyVisconti2020}.

In order to understand how and why the EKI works, a continuous--time limit~\cite{schillingsetal2018,SchillingsPreprint,ChadaStuartTong2019,schillingsstuart2017,schillingsstuart2018} and a mean--field limit on the number of the ensemble members~\cite{CarrilloVaes,DingLi2019,Stuart2019MFEnKF,HertyVisconti2019} have been developed. Continuous--limits have been performed also for variants of the EKI, e.g.~we refer to~\cite{Chada2020} for the hierarchical EKI. Recent theoretical progress~\cite{schillingsstuart2017,HertyVisconti2019} using these limits \revision{in the linear setting} is the starting point of the current work. Specifically, it has been shown that, within these limits \revision{and assuming a linear forward model}, the EKI provides a solution to the inverse problem~\eqref{eq:noisyProb} by minimizing the least--squares functional
\begin{equation} \label{eq:leastSqFnc}
	\Phi(\vec{u},\vec{y}) := \frac12 \left\| \vecsym{\Gamma}^{\frac12} (\vec{y} - \G(\vec{u})) \right\|^2_Y
\end{equation}
via a preconditioned gradient flow equation, where the preconditioner is given by the empirical covariance of the ensemble, in the continuous--time limit, and via a Vlasov--type equation in the mean--field limit. Note that, contrary to the fully discrete and classical formulation of the EKI, there is no regularization of the control $\vec{u}$ in the minimization of~\eqref{eq:leastSqFnc}. However, when the inverse problem is ill-posed, infimization of $\Phi$ is not a well--posed problem and some form of regularization may be required. This has been recognized in~\cite{ChadaStuartTong2019,ZhangStroferXiao2019} where modifications of the EKI are proposed, leading to Tikhonov--Phillips--like regularizations of~\eqref{eq:leastSqFnc}. 

Another source of problems is given by the preconditioned gradient flow structure \revision{and can be analyzed with the system of the first and second central moment, i.e.~the expected value and the variance of the distribution of the ensemble, respectively.} In~\cite{HertyVisconti2019} a one--dimensional linear stability analysis of the moment equations resulting from the mean--field limit revealed that the method has infinitely many non--hyperbolic Bogdanov--Takens equilibria~\cite{guckenheimer2013nonlinear}. In this work, we \rrevision{show also that} this structure of the phase space is kept in the multi--dimensional setting. Although it is possible to show that the equilibrium providing the minimization of~\eqref{eq:leastSqFnc} is still a global attractor, having infinitely many non--hyperbolic equilibria leads to several undesirable consequences. In fact, not only are Bogdanov--Takens equilibria not asymptotically stable they are structurally unstable and thus non robust and extremely sensitive to model perturbations. These equilibria lie on the set where the preconditioner collapse to zero. Thus, convergence to the solution of the minimization of~\eqref{eq:leastSqFnc} may be affected by numerical instability which may occur, e.g., due to an overly confident prior, i.e.~when the initial ensemble is characterized by a small variance. Numerical and practical instability may either push the trajectory in the unfeasible region of the phase space, i.e.~where the variance of the ensemble is negative, or get the method stuck in the wrong equilibrium. These observations would therefore lead to the necessity of employing a proper and robust numerical discretization of the method. Furthermore, the presence of non--hyperbolic equilibria leads to a slow convergence of the method to the equilibrium solution, which happens at the rate $\mathcal{O}(t^{-1})$ where $t$ represents the time.

In this work we address these issues by introducing a modification of the continuous dynamics for the ensemble, in such a way \revision{that} the corresponding phase plane of the moment equations is characterized by a globally asymptotically stable equilibrium, the one minimizing the least--squares functional~\eqref{eq:leastSqFnc}. The stabilization effect is obtained by artificially inflating the preconditioner of the gradient flow equation\rrevision{, i.e.~the ensemble covariance matrix,} with an additive term which acts as a regularization. \rrevision{The inflation term has to be a full rank symmetric matrix.} The advantage is twofold. First, the new phase space of moments is robust. Instead, in the limits of the classical EKI, a small perturbation into the unfeasible region explodes, bringing the trajectory far from the desired one. Second, as consequence of the stabilization, the rate of convergence to the solution of the minimization of~\eqref{eq:leastSqFnc} is improved, resulting now exponentially fast. In addition, we consider a suitable acceleration/relaxation term aiming to control the distance of each ensemble member to their mean. This term further speeds up the convergence rate. The usual properties of the classical ensemble Kalman inversion, such as decay of the ensemble spread, are still satisfied by this stabilized version of the method. Its performance is investigated for an inverse problem based on a two--dimensional elliptic partial differential equation~(PDE). We show that the new method is able to converge to the solution faster and, more importantly, converges independently of the properties of the initial ensemble. \revision{Although the analysis focuses on linear forward models, numerical examples seem to suggest that the stabilization we propose provides advantages also when applied to nonlinear inverse problems. For further evidences on why EKI formulations perform well in nonlinear settings we refer to~\cite{DingLiLu}.}

We point out that, although the instability discussed above is not observed in the discrete version of the EKI, for practical purposes it is still crucial to introduce a stabilization of the continuous limits. In fact, \revision{the mean--field limit can be particularly} useful in applications because \revision{it allows to describe the case of} infinitely many ensemble members and to guarantee a computational gain in the numerical simulations using fast techniques, e.g.~see~\cite{Stuart2019MFEnKF,HertyVisconti2019,SchwenzerViscontiEtAl2019}.

The rest of the paper is organized as follows. In Section~\ref{sec:preliminary} we review the ensemble Kalman filter formulation for inverse problems and the continuous formulations. In particular, the linear stability analysis of the moment equations performed in~\cite{HertyVisconti2019} is recalled. In Section~\ref{sec:stabilization} we discuss the stabilization of the dynamics and analyze the properties of the regularized method. In Section~\ref{sec:numerics} we investigate the ability of the method to provide solution to an inverse problem based on a two--dimensional elliptic PDE. Finally, we summarize the results in Section~\ref{sec:conclusion}.

\section{Preliminaries on the Ensemble Kalman Inversion} \label{sec:preliminary}

We briefly recall the original formulation of the Ensemble Kalman Inversion (EKI), cf.~\cite{iglesiaslawstuart2013}, which is based on a sequential update of an ensemble to estimate the solution of control--to--observable inverse problems. The derivation of the method is presented within optimization theory \revision{considering only the deterministic version of EKI, i.e.~without artificial random perturbation of the measurement during the iteration in time.} Focusing on recent continuous limit formulations which have allowed theoretical analysis of the nature of the method, we review the one--dimensional linear stability analysis of the moment equations performed in~\cite{HertyVisconti2019}.

\subsection{Formulation of the Ensemble Kalman {Inversion}} \label{ssec:enkf}

We consider a number  $J$ of ensemble members (realizations of the control $\vec{u}\in\R^d$) combined in $\vec{U}=\left\{\vec{u}^{j} \right\}_{j=1}^J$ with $\vec{u}^j\in\R^d$. The EKI is originally posed as a discrete iteration on $\vec{U}$, derived by solving a  minimization problem that compromises between the background estimate of the given model and additional information provided by data or measurements. For more details, we refer e.g.~to~\cite{iglesiaslawstuart2013}. The iteration index is denoted by $n$ and the collection of the ensemble members by $\vec{U}^n=\{\vec{u}^{j,n}\}_{j=1}^J$, $\forall\,n\geq 0$. The EKI iterates each component of $\vec{U}^n$ at iteration $n+1$ as 
\begin{equation} \label{eq:updateEnKF}
	\begin{aligned}
		\vec{u}^{j,n+1} &= \vec{u}^{j,n} + \vec{C}_{\G}(\vec{U}^n) \left( \vec{D}_{\G}(\vec{U}^n) + \frac1{\Delta t} \vecsym{\Gamma}^{-1} \right)^{-1} (\vec{y} - \G(\vec{u}^{j,n}) ) \\
	\end{aligned}
\end{equation}
for each $j=1,\dots,J$, where $\Delta t\in\R^+$ is a parameter. In general, each observation or measurement can be perturbed by additive noise~\cite{iglesiaslawstuart2013}. We focus on the case where the measurement data $\vec{y}\in\R^K$ is unperturbed.

The update of the ensemble~\eqref{eq:updateEnKF} requires the knowledge of the operators $\vec{C}_{\G}(\vec{U}^n)$ and $\vec{D}_{\G}(\vec{U}^n)$ which are, in the present finite dimensional setting, the covariance matrices depending on the ensemble set $\vec{U}^n$ at iteration $n$ and on $\G(\vec{U}^n)$, i.e. the image of $\vec{U}^n$ at iteration $n$. More precisely, we have
\begin{equation} \label{eq:covariance}
	\begin{aligned}
		\vec{C}_{\G}(\vec{U}^n) &= \frac{1}{J} \sum_{k=1}^J \left(\vec{u}^{k,n}-\overline{\vec{u}}^n\right) \left(\G(\vec{u}^{k,n})-\overline{\G}^n\right)^\intercal \in \R^{d\times K} \\
		\vec{D}_{\G}(\vec{U}^n) &= \frac{1}{J} \sum_{k=1}^J \left(\G(\vec{u}^{k,n})-\overline{\G}^n\right) \left(\G(\vec{u}^{k,n})-\overline{\G}^n\right)^\intercal \in \R^{K\times K}
	\end{aligned}
\end{equation}
where we define  $\overline{\vec{u}}^n$ and $\overline{\G}^n$ as the mean of $\vec{U}^n$ and $\G(\vec{U}^n)$, respectively:
$$
\overline{\vec{u}}^n = \frac{1}{J} \sum_{j=1}^J \vec{u}^{j,n}, \quad \overline{\G}^n = \frac{1}{J} \sum_{j=1}^J \G(\vec{u}^{j,n}).
$$
The EKI satisfies the subspace property~\cite{iglesiaslawstuart2013}, i.e.~the ensemble iterates stay in the subspace spanned by the initial ensemble. As consequence, the natural estimator for the solution of the inverse problem is provided by the mean of the ensemble.

\subsection{Derivation of the Ensemble Kalman Inversion from an Optimization Point--of--View} \label{ssec:derivation}
The EKI method~\eqref{eq:updateEnKF} for the solution of the inverse problem~\eqref{eq:noisyProb} can be derived by using an optimization point--of--view. 
Below, we briefly review the machinery leading to the EKI formulation. For further details and explicit computations we refer to e.g.~\cite{Stuart2019CEnKF}.

We introduce a new variable $\vec{w} = \G(\vec{u}) \in \R^K$ and reformulate~\eqref{eq:noisyProb} equivalently as
\begin{align*}
	\vec{w} &= \G(\vec{u}) \\
	\vec{y} &= \vec{w} + \boldsymbol{\eta}.
\end{align*}
The problem is then reinterpreted as filtering problem by considering a discrete--time dynamical system with state transitions and noisy observations:
\begin{align*}
	\mbox{(dynamics model) } \quad & \
	\begin{cases}
		\vec{u}^{n+1} = \vec{u}^n \\
		\vec{w}^{n+1} = \G(\vec{u}^n)
	\end{cases} \\
	\mbox{(data model) } \quad & \
	\begin{cases}
		\vec{y} = \vec{w}^{n+1} + \boldsymbol{\eta}.
	\end{cases}
\end{align*}
By defining $\vec{v} = [\vec{u},\vec{w}]^T\in\R^{d+K}$ and $\Xi : \vec{v} \mapsto \Xi(\vec{v}) = [\vec{u},\G(\vec{u})]^T \in\R^{d+K}$, the dynamical model can be written as
$$
\vec{v}^{n+1} = \Xi(\vec{v}^n),
$$
whereas the data model becomes
$$
\vec{y} = \vec{H} \vec{v}^{n+1} + \boldsymbol{\eta},
$$
where $\vec{H} = [\vec{0}_{K\times d},\vec{I}_{K\times K}]\in\R^{K\times (d+K)}$ is an observation matrix. Let us denote by $\{ \vec{v}^{j,n} \}_{j=1}^J$ a collection of $J$ ensemble members, also called particles, at time $n$. The method proceeds as follows.

First, the state of all particles at time $n+1$ is predicted using the dynamical model to give $\{ \hat{\vec{v}}^{j,n+1} \}_{j=1}^J$, i.e.~$\hat{\vec{v}}^{j,n+1}=\Xi(\vec{v}^{j,n})$. The resulting empirical covariance $\vec{Cov}\in \R^{(d+K)\times(d+K)}$ of the uncertainties in the predictions is computed. Exploiting the definition of $\vecsym{\Xi}$ it is easy to check that
\begin{equation*}
	\vec{Cov} = \frac{1}{J} \sum_{k=1}^J (\hat{\vec{v}}^{k,n+1} - \overline{\hat{\vec{v}}}^{n+1}) (\hat{\vec{v}}^{k,n+1} - \overline{\hat{\vec{v}}}^{n+1})^\intercal = \begin{bmatrix} \vec{C} & \vec{C}_{\G} \\ \vec{C}^T_{\G} & \vec{D}_{\G}\end{bmatrix},
\end{equation*}
where $\vec{C}_\G$ and $\vec{D}_\G$ are as in~\eqref{eq:covariance}, whereas
\begin{gather*}
	\vec{C} = \frac{1}{J} \sum_{k=1}^J (\vec{u}^{k,n} - \overline{\vec{u}}^{n}) (\vec{u}^{k,n} - \overline{\vec{u}}^{n})^\intercal \in \R^{d\times d}.
\end{gather*}

Then, the update $\vec{v}^{j,n+1}$ of each particle is determined by imposing first order necessary optimality condition of the following minimization problem
$$
\vec{v}^{j,n+1} = \arg\min_{\vec{v}} \mathcal{J}^{j,n}(\vec{v}),
$$
which is solved sequentially, and where $\mathcal{J}^{j,n}(\vec{v})$ is the objective function which encapsulates the model--data compromise:
\begin{equation} \label{eq:objective}
	\mathcal{J}^{j,n}(\vec{v}) = \frac12 \left\| \vec{y}^{n+1} - \vec{H}\vec{v} \right\|^2_{\vecsym{\Gamma}^{-1}} + \frac12 \left\| \vec{v} - \hat{\vec{v}}^{j,n+1} \right\|^2_{\vec{Cov}}.
\end{equation}

Finally, the update~\eqref{eq:updateEnKF} of $\vec{u}^{j,n+1}$, related to the unknown control state only, is obtained as $\vec{H}^\perp \vec{v}^{j,n+1}$, with $\vec{H}^\perp = [\vec{I}_{d\times d},\vec{0}_{d\times K}] \in \R^{d\times(d+K)}$.

This derivation of the ensemble Kalman inversion from  an optimization point--of--view leads into the introduction and motivation of the stabilized formulation of the continuous limit. In particular, we observe that the first term of the objective~\eqref{eq:objective} corresponds to the least--squares functional $\Phi$ given by~\eqref{eq:leastSqFnc}. Therefore, minimization of~\eqref{eq:objective} can be seen as minimization of $\Phi$ subject to a regularization term involving the covariance of the ensemble.

\begin{remark}
	The derivation of the EKI motivated through the optimization approach assumes that the empirical covariance $\vec{Cov}$ is positive definite $\forall\,n\geq0$. In general, it is not possible to guarantee that. In~\cite{Stuart2019CEnKF} and in~\cite{TongMajdaKelly2016,SchillingsPreprint}, this issue is overcome by a constant or time dependent shifting of $\vec{Cov}$.
\end{remark}

\subsection{Continuous Limits of the Ensemble Kalman Inversion} \label{ssec:limitsEnKF}

\subsubsection{Continuous--time}
As in~\cite{schillingsstuart2017}, we compute the continuous--time limit equation of the update~\eqref{eq:updateEnKF}. We consider the parameter $\Delta t$ as an artificial time step for the discrete iteration, i.e.~$\Delta t \sim N_t^{-1}$ with $N_t$ being the maximum number of iterations and define  $\vec{U}^n \approx \vec{U}(n\Delta t)=\left\{\vec{u}^{j}(n\Delta t) \right\}_{j=1}^J$ for $n\geq 0$. Computing the limit $\Delta t\to 0^+$ we obtain
\begin{equation} \label{eq:continuousEnKF1}
	\begin{aligned}
		\frac{\mathrm{d}}{\mathrm{d}t} \vec{u}^j &= \vec{C}_{\G}(\vec{U}) \vecsym{\Gamma} \left( \vec{y} - \G(\vec{u}^j) \right), \quad j=1,\dots,J \\
		\vec{C}_{\G}(\vec{U}) &= \frac{1}{J} \sum_{k=1}^J \left(\vec{u}^{k}-\overline{\vec{u}}\right) \left(\G(\vec{u}^{k})-\overline{\G}\right)^\intercal
	\end{aligned}
\end{equation}
with initial condition $\vec{U}(0) = \vec{U}^0$.
\subsubsection*{\revision{Linear forward model}}
Let us consider the case of $\G$ linear, i.e.~$\G(\vec{u})=\vec{G} \vec{u}$, with $\vec{G}\in\R^{K\times d}$. Then \eqref{eq:continuousEnKF1} is a gradient descent equation and we can write
$
\vec{C}_\G(\vec{U}) = \frac{1}J \sum_{k=1}^J \left(\vec{u}^k-\overline{\vec{u}}\right) \left(\vec{u}^k-\overline{\vec{u}}\right)^T \vec{G}^T.
$
Since the least--squares functional~\eqref{eq:leastSqFnc} yields
\begin{equation} \label{eq:gradientGLinear}
	\nabla_\vec{u} \Phi(\vec{u},\vec{y}) = - \vec{G}^T \vecsym{\Gamma} ( \vec{y} - \vec{G} \vec{u} ),
\end{equation}
equation~\eqref{eq:continuousEnKF1} can be stated in terms of the gradient of $\Phi$ as
\begin{equation} \label{eq:gradientEq}
	\begin{aligned}
		\frac{\mathrm{d}}{\mathrm{d}t} \vec{u}^j &= - \vec{C}(\vec{U}) \nabla_\vec{u} \Phi(\vec{u}^j,\vec{y}), \quad j=1,\dots,J \\
		\vec{C}(\vec{U}) &= \frac{1}J \sum_{k=1}^J (\vec{u}^k-\overline{\vec{u}}) ( \vec{u}^k - \overline{\vec{u}} )^\intercal.
	\end{aligned}
\end{equation}
Equation~\eqref{eq:gradientEq} describes a preconditioned gradient descent equation for each ensemble aiming to minimize $\Phi$. $\vec{C}(\vec{U})$ is positive semi--definite and hence
\begin{equation} \label{eq:decreasePhi}
	\frac{\mathrm{d}}{\mathrm{d}t} \Phi(\vec{u}(t),\vec{y}) = \frac{\mathrm{d}}{\mathrm{d}t} \frac12 \left\| \vecsym{\Gamma}^{\frac12} \left(\vec{y}-\vec{G}\vec{u}\right) \right\|^2 \leq 0.
\end{equation}
Although the forward operator is assumed to be linear, the gradient flow is nonlinear. For further details and properties of the gradient descent equation~\eqref{eq:gradientEq} we refer to~\cite{schillingsstuart2017}. In particular,  the subspace property of the EKI also holds for the continuous dynamics.

Note that, in the continuous--time limit a term originally present in the fully--discrete EKI method~\eqref{eq:updateEnKF} is lost, cf.~\eqref{eq:updateEnKF}.
This is due to the scaling assumption of the measurement covariance by $\Delta t$ which makes the term of order $\Delta t^2$ vanishing in the limit $\Delta t\to 0^+$. This term is however not a Tikhonov regularization--type term but may act as regularization term. We will come back to this point in the next sections.


\subsubsection{Mean--field} \label{sssec:MFEnKF}
By definition, the EKI method is a computational method and hence is calculated for a finite ensemble size.  The behavior of the method in the limit of infinitely many ensembles can be studied via mean--field limit leading to a Vlasov--type kinetic equation for the compactly supported on $\R^d$ probability density of $\vec{u}$ at time $t$, denoted by
\begin{equation} \label{eq:kineticf}
	f = f(t,\vec{u}) : \R^+ \times \R^d \to \R^+.
\end{equation}

First we show the limit equation for the case of a non--linear model and later specialize it to a linear model $\vec{G}$. We follow the classical formal derivation to formulate a mean--field equation of a particle system, see~\cite{CarrilloFornasierToscaniVecil2010,hatadmor2008,PareschiToscaniBOOK,Toscani2006}. We introduce the first moments $\vec{m}\in\R^d$, $\vec{m}_\G\in\R^K$ and the second moments $\vec{E}\in\R^{d\times d}$, $\vec{E}_\G\in\R^{d\times K}$ of $f$ at time $t$, respectively, as
\begin{equation} \label{eq:moments}
	\begin{gathered}
		\vec{m}(t) = \int_{\R^d} \vec{u} f(t,\vec{u}) \mathrm{d}\vec{u}, \quad \vec{E}(t) = \int_{\R^d} \vec{u} \otimes \vec{u} f(t,\vec{u}) \mathrm{d}\vec{u}, \\
		\vec{m}_\G(t) = \int_{\R^d} \G(\vec{u}) f(t,\vec{u}) \mathrm{d}\vec{u}, \quad \vec{E}_\G(t) = \int_{\R^d} \vec{u} \otimes \G(\vec{u}) f(t,\vec{u}) \mathrm{d}\vec{u}.
	\end{gathered}
\end{equation}
Since $\vec{u}\in\R^d$, the corresponding discrete measure on the ensemble set $\vec{U} = \left\{ \vec{u}^j \right\}_{j=1}^J$ is given by the empirical measure 
\begin{equation} \label{eq:empiricalf}
	f(t,\vec{u}) = \frac{1}J \sum_{j=1}^J \delta(\vec{u}^j - \vec{u}).
\end{equation}

Let us consider the interacting particle system~\eqref{eq:continuousEnKF1}.
The empirical measure allows for a mean--field limit of $\vec{C}_\G$ as 
$$
\left(\vec{C}_\G\right)_{\kappa,\ell} = \int_{\R^d} u_\kappa \G(u)_\ell f(t,\vec{u}) \mathrm{d}\vec{u} - \int_{\R^d} u_\kappa f(t,\vec{u}) \mathrm{d}\vec{u} \int_{\R^d} \G(u)_\ell f(t,\vec{u}) \mathrm{d}\vec{u}, \ \kappa,\ell=1,\dots,d
$$
and therefore $\vec{C}_\G$ can be written in terms of the moments~\eqref{eq:moments} of $f$ only as
\begin{equation} \label{eq:covarianceNonLinMeanField}
	\vec{C}_\G(f) = \vec{E}_\G(t) - \vec{m}(t) \vec{m}_\G^\intercal(t) \geq 0.
\end{equation}
We denote a sufficiently smooth test function by $\varphi(\vec{u}) \in C_0^1(\R^d)$ and compute
\begin{align*}
	\frac{\mathrm{d}}{\mathrm{d}t} \left\langle f , \varphi \right\rangle &= \frac{\mathrm{d}}{\mathrm{d}t} \int_{\R^d} \frac{1}{J} \sum_{j=1}^J \delta(\vec{u} - \vec{u}^j) \varphi(\vec{u}) \mathrm{d}\vec{u} = - \frac{1}{J} \sum_{j=1}^J \nabla_\vec{u} \varphi(\vec{u}^j) \cdot \vec{C}_\G(f) \vecsym{\Gamma} (\vec{y}-\G(\vec{u}^j)) \\
	&= - \int_{\R^d} \nabla_\vec{u} \varphi(\vec{u}) \cdot \vec{C}_\G(f) \vecsym{\Gamma} (\vec{y}-\G(\vec{u})) f(t,\vec{u}) \mathrm{d}\vec{u}
\end{align*}
which finally leads to the strong form of the mean-field kinetic equation corresponding to the continuous--time limit~\eqref{eq:continuousEnKF1}:
\begin{equation} \label{eq:kineticFromNonLinEnKF}
	\partial_t f(t,\vec{u}) - \nabla_\vec{u} \cdot \left( \vec{C}_\G(f) \vecsym{\Gamma} (\vec{y}-\G(\vec{u})) f(t,\vec{u}) \right) = 0.
\end{equation}

\subsubsection*{\revision{Linear forward model}}
In case of a linear model $\G(\cdot)=\vec{G}\cdot$\revision{, which is the assumption we use for the subsequent analysis,} the mean--field kinetic equation corresponding to the gradient descent equation~\eqref{eq:gradientEq} becomes
\begin{equation} \label{eq:kineticFromEnKF}
	\partial_t f(t,\vec{u}) - \nabla_\vec{u} \cdot \left( \vec{C}(f) \nabla_\vec{u} \Phi(\vec{u},\vec{y}) f(t,\vec{u}) \right) = 0.
\end{equation}
where, similarly to $\vec{C}_\G(f)$, the operator $\vec{C}(f)$ can be also defined in terms of moments of the empirical measure~\eqref{eq:empiricalf} as
\begin{equation} \label{eq:covarianceMeanField}
	\vec{C}(f) = \vec{E}(t) - \vec{m}(t) \vec{m}^\intercal(t) \geq 0.
\end{equation}
For the rigorous mean--field derivation and analysis of the EKI we refer to~\cite{CarrilloVaes,DingLi2019}.

We observe that~\eqref{eq:kineticFromEnKF} is a nonlinear transport equation arising from non--linear gradient flow interactions and the counterpart of~\eqref{eq:decreasePhi} holds at the kinetic level. Defining
$$
\mathcal{L}(f,\vec{y}) = \int_{\R^d} \Phi(\vec{u},\vec{y}) f(t,\vec{u}) \mathrm{d}\vec{u}
$$
we compute
$$
\frac{\mathrm{d}}{\mathrm{d}t} \mathcal{L}(f,\vec{y}) = \int_{\R^d} \Phi(\vec{u},\vec{y}) \partial_t f(t,\vec{u}) \mathrm{d}\vec{u} = - \int_{\R^d} (\nabla_\vec{u} \Phi(\vec{\vec{u}},\vec{y}) )^T \vec{C}(f) \nabla_\vec{u} \Phi(\vec{\vec{u}},\vec{y}) \mathrm{d}\vec{u} \leq 0
$$
since $\vec{C}(f)$ is positive semi-definite. In particular, $\mathcal{L}(f,\vec{y})$ is decreasing unless $f$ is a Dirac
distribution. This reveals that a solution of $\min_{\vec{u}\in\R^d}\Phi(\vec{u},\vec{y})$ provides a steady solution of the continuous--limit formulation, but the converse is not necessarily true. In fact, all Dirac distributions, i.e.~all $f$ satisfying $\vec{C}(f)=0$, provide steady solutions of~\eqref{eq:kineticFromEnKF}. In particular, the velocity of convergence to the \emph{correct} Dirac distribution may be highly influenced by the initial properties of the initial condition, i.e.~by the distribution of the initial ensemble.

\subsection{Stability of the Moment Equations} \label{ssec:stabilityEnKF}

The previous observations 
are in--depth investigated 
at the level of moments of the mean--field equation, in order to gain insights on the nature of the steady states. We recall that the expected value of the ensemble is selected as estimator for the solution, due to the subspace property satisfied by the EKI. For this reason the analysis of moments is of crucial importance in order to understand the properties of the method.

In~\cite{HertyVisconti2019}, the linear stability analysis of the moment equations resulting from~\eqref{eq:kineticFromEnKF} has been investigated for one--dimensional controls. Here, we briefly review, but also extend, that analysis. First, we restrict the attention to the case $d=K=1$. From now on, we avoid using bold font to emphasize \revision{that} the involved quantities are one--dimensional.

The dynamical system for the first and second moment is computed from~\eqref{eq:kineticFromEnKF}. Using the linearity of the model we obtain
\begin{equation} \label{eq:momentODEs}
	\begin{aligned}
		\frac{\mathrm{d}}{\mathrm{d}t} m(t) &=  C(m,E) G^T \Gamma ( y - G m ) \\
		\frac{\mathrm{d}}{\mathrm{d}t} E(t) &= 2 C(m,E) G^T \Gamma ( y m - G E ), 
	\end{aligned}
\end{equation}
where we observe that $C(m,E)$ corresponds to the variance, in fact $$\int_{\R} (u-m(t))^2 f(t,u) \mathrm{d}u = E(t) - m(t)^2.$$ System~\eqref{eq:momentODEs} is closed by the second moment equation \revision{due to the assumption of a linear model. If the forward model were nonlinear, then the knowledge of the dynamics of the mixed moments in~\eqref{eq:moments} would be required to have a closed hierarchy.}

We analyze steady--states and their stability with $G=\Gamma=1$. Nullclines of~\eqref{eq:momentODEs} are \revision{given by}
\begin{gather*}
	\revision{\frac{\mathrm{d}}{\mathrm{d}t} m(t) = 0 \ \Leftrightarrow \ m = y \, \lor \, E = m^2} \\
	\revision{\frac{\mathrm{d}}{\mathrm{d}t} E(t) = 0 \ \Leftrightarrow \ E = ym \, \lor \, E = m^2}.
\end{gather*}
Equilibrium points arise by intersection of the nullclines and are \revision{thus}
\begin{equation} \label{eq:equilibriumEnKF}
	F_k = (k,k^2), \ k\in\R,
\end{equation}
i.e.~all equilibria are points on the set $E=m^2$ for which $C = 0$. We note that they lie on the boundary of the admissible region $C \geq 0$. This means that all Dirac delta distributions are steady--states of the mean--field equation, as shown for cases of arbitrary dimension at the end of Section~\ref{sssec:MFEnKF}. As consequence we have a set of infinitely many steady--states. The one minimizing the least square functional $\Phi$ is $\delta(u-y)$, corresponding to $F_y=(y,y^2)$. Studying the linear \revision{(in)}stability of the equilibrium points, it is simple to show that all the $F_k$'s have double--zero
eigenvalues and, thus are non--hyperbolic Bogdanov--Takens--type equilibria.

Nevertheless, we point out that the desired steady state $F_y$, although being a Bogdanov--Takens equilibrium, is still a global attractor of the dynamics when the initial condition belongs to the region $C>0$.
In order to show this, consider the coupled system of ODEs for $m$ and $C=E-m^2$, with $G=\Gamma=1$:
\begin{align*}
	\frac{\mathrm{d}}{\mathrm{d}t} m(t) &=  C ( y - m ) \\
	\frac{\mathrm{d}}{\mathrm{d}t} C(t) &= -2 C^2.
\end{align*}
Then, we observe that, on the region $C>0$, $\dot{m}>0$ for $m<y$ and $\dot{m}<0$ for $m>y$. Instead, $\dot{C}<0$ for all $C$. The trajectories in the phase space $(m,C)$ can be computed by solving
$$
\frac{\mathrm{d}}{\mathrm{d}m} C(m) = - 2 \frac{C}{(y-m)}
$$
which gives $C(m) = c(m-y)^2$, with $c$ an integration constant. This proves that the equilibrium $(m,C)=(y,0)$, which corresponds to $F_y$ in the phase space $(m,E)$, is the global attractor for all initial conditions $C>0$.

However, having non--hyperbolic equilibria has several undesirable consequences: 
Bogdanov--Takens equilibria are not asymptotically stable, in fact their linearization is unstable. More importantly, they 
are non--hyperbolic and thus structurally unstable, i.e.~susceptible to qualitative changes under arbitrary small perturbations of the underlying model. In addition, the instability of the phase space may result also in numerical instabilities leading trajectories to the unfeasible region, i.e.~where $C(m,E)<0$, or to get stuck in an equilibrium point which is not $F_y$. Finally, we may face a slow convergence to the global attractor equilibrium. In fact, we notice that the differential equation for $C$ can also be solved explicitly, giving us
\begin{align*}
	C(t) = \frac{C(0)}{1+2 C(0)t},
\end{align*}
with $C(0)$ being the initial condition. Thus, $C(t)\to 0^+$, as $t\to+\infty$, very slowly, precisely with rate $\mathcal{O}(t^{-1})$.

\subsubsection{Stability for Multi--Dimensional Controls} \label{sssec:multidStabilityEKI}
We recall that the existence of infinitely many steady--states satisfying $\vec{C}=\vec{0}$ holds in general dimension, as observed in Section~\ref{sssec:MFEnKF}. Instead, the above stability analysis is performed in the simplest setting $d=1$. \rrevision{However, we can show that the nature of the moment equilibria} lying in the kernel of $\vec{C}$ is maintained also in general dimension, i.e.~when $d>1$.

Let $\vec{y}\in\R^d$ and consider the $d+d^2$ dynamical system
\begin{equation} \label{eq:multidMoments}
	\rrevision{
		\begin{aligned}
			\frac{\mathrm{d}}{\mathrm{d}t} \vec{m}(t) &= (\vec{E}-\vec{m}\vec{m}^T)(\vec{y}-\vec{m}) \\
			\frac{\mathrm{d}}{\mathrm{d}t} \vec{E}(t) &= (\vec{E}-\vec{m}\vec{m}^T)(\vec{y}\vec{m}^T-\vec{E}) + (\vec{m}\vec{y}^T-\vec{E})(\vec{E}-\vec{m}\vec{m}^T),
		\end{aligned}
	}
\end{equation}
where $\vec{m}\in\R^d$ and $\vec{E}\in\R^{d\times d}$. We are interested in the equilibrium points $(\vec{m}^*,\vec{E}^*)$ such that $\vec{C}^*:=\vec{E}^*-\vec{m}^*(\vec{m}^*)^T = \vec{0}$. To this end, it is convenient to rewrite, equivalently, the dynamical system~\eqref{eq:multidMoments} in terms of $\vec{m}$ and $\vec{C}=\vec{E}-\vec{m}\vec{m}^T\in\R^{d\times d}$ as
\begin{equation} \label{eq:multidMoments2}
	\rrevision{
		\begin{aligned}
			\frac{\mathrm{d}}{\mathrm{d}t}\vec{m}(t) =& \vec{C} (\vec{y} - \vec{m}) \\
			\frac{\mathrm{d}}{\mathrm{d}t}\vec{C}(t) =& \frac{\mathrm{d}}{\mathrm{d}t}\vec{E} - \frac{\mathrm{d}}{\mathrm{d}t}\vec{m} \vec{m}^T - \vec{m} \frac{\mathrm{d}}{\mathrm{d}t}\vec{m}^T\\
			=& -2 \vec{C} \vec{C}.
		\end{aligned}
	}
\end{equation}
It is clear that $\vec{C}=\vec{0}$ defines a set of equilibrium points of~\eqref{eq:multidMoments2}. By linearization we compute the Jacobian matrix
$$
\mathbf{J} = \begin{bmatrix}
\mathbf{J}_{11} & \mathbf{J}_{12} \\
\mathbf{J}_{21} & \mathbf{J}_{22}
\end{bmatrix} \in \mathbb{R}^{(d+d^2)\times (d+d^2)}
$$
where the four blocks are
\begin{gather*}
	\mathbf{J}_{11} = \frac{\partial \dot{\vec{m}}}{\partial \vec{m}} \in \mathbb{R}^{d\times d}, \quad \mathbf{J}_{12} = \frac{\partial \dot{\vec{m}}}{\partial \vec{C}} \in \mathbb{R}^{d\times d^2}, \\
	\mathbf{J}_{21} = \frac{\partial \dot{\vec{C}}}{\partial \vec{m}} \in \mathbb{R}^{d^2\times d},\quad \mathbf{J}_{22} = \frac{\partial \dot{\vec{C}}}{\partial \vec{C}} \in \mathbb{R}^{d^2\times d^2}.
\end{gather*}
\rrevision{Then, on $\vec{C}=\vec{0}$ we have $\mathbf{J}_{11} = -\vec{C} = \vec{0}$ and, similarly,  $\mathbf{J}_{22} = \vec{0}$ since it is linear with respect to $\vec{C}$. Instead,
	$$
	\mathbf{J}_{12} = \begin{bmatrix} \mathbf{M}_1, \dots, \mathbf{M}_d\end{bmatrix}, \quad \mathbf{M}_i = \vec{e}_i (\vec{y}-\vec{m})^T \in \mathbb{R}^{d\times d}
	$$
	where $\vec{e}_i\in\mathbb{R}^d$ is the $i$--th vector of the standard basis of the Euclidean space $\mathbb{R}^d$. Then, in general, $\mathbf{J}_{12}\neq 0$ also on $\vec{C}=\vec{0}$, but $\mathbf{J}_{21} \equiv \vec{0}$. We conclude that the Jacobian $\vec{J}$ has $d$ zero eigenvalues on $\vec{C}=\vec{0}$, corresponding to non--hyperbolic steady states of Bogdanov--Takens type. Therefore, all the undesirable consequences discussed in the one--dimensional setting occur also in general dimension.}

\section{Stabilization of the Dynamics} \label{sec:stabilization}

Our goal is to introduce a modified formulation of the continuous dynamics in order to make the \rrevision{target} equilibrium a globally asymptotically stable equilibrium of the system of moment equations. We are only interested in the \rrevision{target} equilibrium since the others are irrelevant for the optimization. The modification we propose  is inspired by the idea to restore the regularization effect of the discrete EKI which gets lost in the continuous limits.

\rrevision{Given $\vecsym{\Sigma}\in\R^{d\times d}$ symmetric,} we propose to consider the following general discrete dynamics for each ensemble member $j=1,\dots,J$:
\begin{equation} \label{eq:stableEnKF}
	\begin{aligned}
		\frac{\mathrm{d}}{\mathrm{d}t} \vec{u}^j &= \tilde{\vec{C}}_\G(\vec{U}) \vecsym{\Gamma} (\vec{y}-\G(\vec{u}^j)) + R(\vec{U}), \\
		R(\vec{U}) &= \beta \tilde{\vec{C}}(\vec{U})(\vec{u}^j-\bar{\vec{u}}), \\
		\tilde{\vec{C}}_\G(\vec{U}) &= \rrevision{\vec{C}_\G(\vec{U}) + (1-\alpha) \vecsym{\Sigma}_\G}, \\
		\tilde{\vec{C}}(\vec{U}) &= \rrevision{\vec{C}(\vec{U}) + (1-\alpha) \vecsym{\Sigma}},
	\end{aligned}
\end{equation}
with \rrevision{$\alpha,\beta\in\R$ and where $\vecsym{\Sigma}_G:=\left(\G(\vecsym{\Sigma})\right)^T\in\R^{d\times K}$ with $\G$ acting on the columns of $\vecsym{\Sigma}$. The choices $\alpha=1$ and $\beta=0$ yield the} continuous--time limit~\eqref{eq:continuousEnKF1} for the original ensemble Kalman inversion. Since the analysis of stability of the new dynamics~\eqref{eq:stableEnKF} will be performed again in the case of a linear model, now we specialize~\eqref{eq:stableEnKF} to this particular setting and then we discuss the role of the modifications we propose.
\subsubsection*{\rrevision{Linear forward model and mean--field limit}} \revision{The analysis of the proposed stabilization will be also performed in the linear setting} $\G(\cdot)=\vec{G}\cdot$. \revision{Then,} \eqref{eq:stableEnKF} becomes
\begin{equation} \label{eq:stableLinearEnKF}
	\begin{aligned}
		\frac{\mathrm{d}}{\mathrm{d}t} \vec{u}^j &= - \tilde{\vec{C}}(\vec{U}) \nabla_\vec{u} \Phi(\vec{u}^j,\vec{y}) + R(\vec{U}), \\
		R(\vec{U}) &= \beta \tilde{\vec{C}}(\vec{U})(\vec{u}^j-\bar{\vec{u}}), \\
		\tilde{\vec{C}}(\vec{U}) &= \rrevision{\vec{C}(\vec{U}) + (1-\alpha)\vecsym{\Sigma}},
	\end{aligned}
\end{equation}
with $\Phi$ being the least--squares functional~\eqref{eq:leastSqFnc}.

The modified dynamics~\eqref{eq:stableLinearEnKF} differs from the standard continuous--time limit of the discrete EKI, cf.~\eqref{eq:gradientEq}, in the formulation of the preconditioner $\tilde{\vec{C}}(\vec{U})$ and in the presence of the additive term $R(\vec{U})$. The new preconditioner can be thought, for \rrevision{$\alpha<1$}, as inflation of the covariance $\vec{C}(\vec{U})$ defined in~\eqref{eq:gradientEq}. This modification allows us to stabilize the phase space of the moments and \rrevision{$\alpha$} plays the role of a regularization/bifurcation parameter. The term $R(\vec{U})$, instead, can be thought as acceleration/relaxation to equilibrium. \revision{This formal presentation of the role of the parameters will be made mathematically rigorous in the following, being the core of the analysis in Section~\ref{ssec:stabilitySEnKF}, Section~\ref{ssec:varianceInflation} and Section~\ref{ssec:analysisEnsemble}.}



\revision{Performing formal computations as in Section~\ref{sssec:MFEnKF}, it is possible to show that the solution of the dynamical system~\eqref{eq:stableLinearEnKF} satisfies the weak form of the following mean--field equation:
	\begin{equation} \label{eq:stableMeanField}
		\begin{aligned}
			\partial_t f(t,\vec{u}) - \nabla_\vec{u} \cdot \left( \tilde{\vec{C}}(f) \left( \nabla_\vec{u} \Phi(\vec{u},\vec{y}) - \beta (\vec{u}-\vec{m}) \right) f(t,\vec{u}) \right)= 0,
		\end{aligned}
	\end{equation}
	where $\tilde{\vec{C}}(f)$ is the mean--field interpretation of $\tilde{\vec{C}}(\vec{U})$ in~\eqref{eq:stableLinearEnKF}. In fact, via the empirical measure~\eqref{eq:empiricalf} we have
	\begin{align*}
		\left( \tilde{\vec{C}}(\vec{U}) \right)_{i,\ell} =& \int_{\R^d} u_i u_\ell f(t,\vec{u}) \mathrm{d}\vec{u} - \int_{\R^d} u_i f(t,\vec{u}) \mathrm{d}\vec{u} \int_{\R^d} u_\ell f(t,\vec{u}) \mathrm{d}\vec{u} \\ & \rrevision{+(1-\alpha)\vecsym{\Sigma}}, \quad i,\ell=1,\dots,d
	\end{align*}
	and therefore it can be written in terms of the moments of $f$ only, leading to}
$$
\tilde{\vec{C}}(f) = \vec{E}(t) - \vec{m}(t)\vec{m}^\intercal(t) \rrevision{+(1-\alpha)\vec{\Sigma}}.
$$
Note that, if $\alpha=1$, $\tilde{\vec{C}}(f) \equiv \vec{E}(t) - \vec{m}(t)\vec{m}^\intercal(t) = \vec{C}(f) \geq 0$, see~\eqref{eq:covarianceMeanField}, and that $\tilde{\vec{C}}$ can be seen as inflation of the covariance $\vec{C}$ for $\alpha<1$.

\subsection{\rrevision{One--dimensional Stability Analysis of the Moment Equations}} \label{ssec:stabilitySEnKF}

The stability of moments is again studied in the simplest setting of a one--dimensional problem, i.e.~$d=K=1$. From~\eqref{eq:stableMeanField} we compute
\begin{equation} \label{eq:stableMoments}
	\begin{aligned}
		\frac{\mathrm{d}}{\mathrm{d}t} m &= \tilde{C}(m,E) G^T \Gamma ( y - Gm ) \\
		\frac{\mathrm{d}}{\mathrm{d}t} E &= 2 \tilde{C}(m,E) \left( G^T \Gamma ( y m  - GE ) 
		+ \beta C(m,E) \right)\\
		\tilde{C}(m,E) &= \rrevision{E - m^2 + (1-\alpha)\sigma}\\
		C(m,E) &= E - m^2,
	\end{aligned}
\end{equation}
where we again avoid the use of bold fonts to highlight the one--dimensional quantities. \rrevision{Here $\sigma$ is a positive scalar and for instance one could take $\sigma=m^2$ so that $\tilde{C}(m,E) = E - \alpha m^2$. Without loss of generality we consider this choice in the subsequent analysis.}

We introduce the following concepts of admissible initial conditions, solutions and equilibria of~\eqref{eq:stableMoments}.
\begin{mydef} \label{def:admissible}
	We say that an initial condition of~\eqref{eq:stableMoments} is admissible if $(m(0),E(0))\in\R\times\R^+$ and $E(0)> m(0)^2$.\\
	\indent
	We say that a solution of~\eqref{eq:stableMoments} with admissible initial condition is admissible or belongs to the feasible domain of the phase space $(m,E)$ if $t\in\R^+\mapsto(m(t),E(t))\in\R\times\R^+$ satisfies $E(t)\geq m(t)^2$, $\forall\,t>0$.\\
	\indent
	We say that $F=(m_\infty,E_\infty)$ is an admissible or feasible equilibrium point of~\eqref{eq:stableMoments} if $(\dot{m},\dot{E})|_{F}\equiv 0$ and $F\in\R\times\R^+$ with $E_\infty \geq m_\infty^2$.
\end{mydef}
Definition~\ref{def:admissible} can be generalized to the dynamics in general dimension.

Now, we analyze the behavior of the phase portrait of the dynamical system~\eqref{eq:stableMoments}.
\begin{proposition} \label{th:stability}
	Let $(m(0),E(0))=(m_0,E_0)$ be an admissible initial condition of~\eqref{eq:stableMoments}. Assume that $G=\Gamma=1$. Then, the dynamical system has two feasible equilibrium points, $F_y=(y,y^2)$ and $F_{0,\alpha}=(0,0)$, $\forall\,\alpha\in\R$. In particular, $F_{0,\alpha}$ is a non--hyperbolic Bogdanov--Takens equilibrium, and $F_y$ is an asymptotically stable equilibrium, namely $\exists\,\delta>0$ such that if $\| (m_0,E_0) - F_y \| < \delta$ then $\lim_{t\to\infty} (m(t),E(t))=F_y$, provided $\alpha<1$ and $\beta<1$.	
\end{proposition}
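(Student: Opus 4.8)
The plan is to first pin down the feasible equilibria by exploiting that the scalar factor $\tilde C(m,E)=E-\alpha m^2$ multiplies both right-hand sides of~\eqref{eq:stableMoments}, and then to linearize at each of them; the Jacobians turn out to be triangular, so the eigenvalues can be read off directly. From $\dot m=0$ one gets either $\tilde C(m,E)=0$ or $m=y$. On the parabola $\tilde C(m,E)=E-\alpha m^2=0$ both equations vanish identically, so every such point is an equilibrium; admissibility forces $E\ge m^2$, i.e.\ $(\alpha-1)m^2\ge 0$, and since $\alpha<1$ this leaves only $m=0$, $E=0$, namely $F_{0,\alpha}=(0,0)$. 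On the line $m=y$ (with $\tilde C\neq 0$) the second equation reduces to $2\tilde C(y,E)(y^2-E)(1-\beta)=0$, and since $\beta<1$ this yields $E=y^2$, that is $F_y=(y,y^2)$ (the root $E=\alpha y^2$ again lies outside the feasible region for $\alpha<1$). One checks directly that $F_y$ and $F_{0,\alpha}$ are equilibria for \emph{every} $\alpha,\beta$ and lie in $\R\times\R^+$ with $E\ge m^2$; we take $y\neq 0$ so that the two points are distinct (for $y=0$ they coincide).

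Next I would linearize. Writing $Q(m,E):=(ym-E)+\beta(E-m^2)$, one has $\tilde C=Q=0$ and $\partial_m\tilde C=0$ at the origin, so the Jacobian of~\eqref{eq:stableMoments} at $F_{0,\alpha}$ is
\[
J_{F_{0,\alpha}}=\begin{pmatrix}0 & y\\ 0 & 0\end{pmatrix},
\]
with a double eigenvalue $0$ and, for $y\neq 0$, a single Jordan block: a non--hyperbolic Bogdanov--Takens equilibrium, hence not asymptotically stable, reproducing the degeneracy of the original dynamics. At $F_y$ one has $\tilde C=(1-\alpha)y^2$ and $Q=0$, and these two facts kill every mixed term in the differentiation, leaving the lower--triangular matrix
\[
J_{F_y}=\begin{pmatrix}-(1-\alpha)y^2 & 0\\ 2(1-\alpha)(1-2\beta)y^3 & -2(1-\alpha)(1-\beta)y^2\end{pmatrix},
\]
with eigenvalues $\lambda_1=-(1-\alpha)y^2$ and $\lambda_2=-2(1-\alpha)(1-\beta)y^2$. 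For $y\neq 0$, $\alpha<1$ and $\beta<1$ both are strictly negative, so $F_y$ is a hyperbolic sink; by the standard linearized--stability theorem for ODEs there is $\delta>0$ such that $\|(m_0,E_0)-F_y\|<\delta$ implies $(m(t),E(t))\to F_y$ as $t\to\infty$, which is the assertion.

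The only genuinely delicate point is the feasibility bookkeeping in the equilibrium count: one must argue that the entire parabola $E=\alpha m^2$ of spurious equilibria, together with the stray root $E=\alpha y^2$ on the line $m=y$, meets the admissible set $\{E\ge m^2\}$ only at the origin — which is exactly where the hypothesis $\alpha<1$ enters — and one must keep track of the degenerate case $y=0$ in which $F_y$ and $F_{0,\alpha}$ merge. The linearizations themselves are routine once one observes that $Q$ vanishes at both equilibria, which is what makes the Jacobians triangular and the eigenvalues explicit.
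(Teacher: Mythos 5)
Your proof is correct and follows essentially the same route as the paper: equilibria from the nullclines, then linearization, with your eigenvalues $-(1-\alpha)y^2$ and $-2(1-\alpha)(1-\beta)y^2$ at $F_y$ and the double--zero Jordan block at $F_{0,\alpha}$ agreeing exactly with the paper's $\lambda_1^{F_y}=y^2(\alpha-1)$, $\lambda_2^{F_y}=-2y^2(\beta-1)(\alpha-1)$ and $\lambda_{1,2}^{F_{0,\alpha}}=0$. You are in fact slightly more explicit than the paper (written-out Jacobians, the feasibility bookkeeping for the $F_{k,\alpha}$ family, and the degenerate case $y=0$, which the paper leaves implicit).
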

\begin{proof}
	Steady states are obtained as intersection of the nullclines. For the system~\eqref{eq:stableMoments} with $G=\Gamma=1$ we get the following equilibrium points $(m,E)$:
	\begin{equation} \label{eq:equilibriumStable}
		F_y=(y,y^2), \quad F_{k,\alpha}=(k,\alpha k^2), \ k\in\R.
	\end{equation}
	By the Hartman--Grobman Theorem, non--linear dynamical systems are locally topologically conjugate to their linearized formulations near hyperbolic fixed point. Thus, if $F_y$ is hyperbolic and asymptotically stable, the local phase portrait of the non--linear system~\eqref{eq:stableMoments} is equivalent to that of its linearization $[\dot{m},\dot{E}]^T=\vec{J}(m,E)[m,E]^T$, where $\vec{J}(m,E)$ is the Jacobian. 
	The eigenvalues of the Jacobian evaluated at the equilibrium point $F_y$ are
	\begin{align}
		\lambda_1^{F_y}=y^2(\alpha-1), \quad \lambda_2^{F_y}=-2y^2(\beta-1)(\alpha-1),
	\end{align}
	therefore $F_y$ is an asymptotically stable equilibrium if $\alpha<1$ and $\beta<1$. We notice that $\alpha<1$ automatically implies that, except for $k=0$,  all the other equilibria $F_{k,\alpha}$ are not feasible since then $C(m,E)<0$.  In addition, $F_{0,\alpha}$ is still a Bogdanov--Takens--type equilibrium, and thus unstable, since its eigenvalues are	$\lambda_{1,2}^{F_{0,\alpha}}=0$.
\end{proof}

The previous result shows that $\alpha$ plays the role of a bifurcation parameter. In fact, for $\alpha \to 1$ we recover the same equilibria and topological behavior in the phase space as in the classical continuous--limits of the ensemble Kalman inversion, cf.~\eqref{eq:equilibriumEnKF}. In particular, $\alpha$ allows to stabilize the dynamics. The new ensemble update~\eqref{eq:stableLinearEnKF}
still has infinitely many equilibria but on the set $E=\alpha m^2$, which lies in the unfeasible region of the phase space for $\alpha<1$. In addition, for this choice of $\alpha$, \eqref{eq:equilibriumEnKF} has $F_y$ as isolated hyperbolic and asymptotically stable fixed point of the dynamics.

\begin{remark}[Lower bound for $\alpha$]	
	We observe that we have not discussed the stability of the equilibrium points $F_{k,\alpha}$, since the choice $\alpha<1$ makes them unfeasible. If we impose that they are unstable, then we need additional constraints on $\alpha$ and $\beta$. The eigenvalues of a point $F_{k,\alpha}$, $k\in\R\setminus\{0\}$, are
	$$
	\lambda_1^{F_{k,\alpha}} = 0, \quad \lambda_2^{F_{k,\alpha}} = 2 k (y-k\beta)(1-\alpha).
	$$
	Thus, these points are still non--hyperbolic, saddle--node--type equilibria and unstable from all approaching trajectories if $\beta<\frac{y}{k}$, when $k>0$, and if $\beta>\frac{y}{k}$, when $k<0$. In particular, if $k=\frac{y}{\alpha+\beta-\alpha\beta}$, we require that $\alpha\geq 0$, obtaining a lower bound for the bifurcation parameter $\alpha$.
\end{remark}

Finally, we are ready to prove the global asymptotic stability of $F_y$, which is guaranteed by the following proposition.
\begin{proposition} \label{th:globalStability}
	The point $F_y=(y,y^2)$ is a globally asymptotically stable equilibrium of the dynamical system~\eqref{eq:stableMoments} with $G=\Gamma=1$, namely $\lim_{t\to\infty} (m(t),E(t))=F_y$ for any admissible initial condition $(m(0),E(0))$, provided $\alpha<1$ and $\beta<1$.
\end{proposition}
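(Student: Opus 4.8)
The plan is to carry out the same reduction used for the unstabilized system, passing to the variables $(m,C)$ with $C=E-m^2$, and then to read off an explicit formula for the phase-plane orbits. With the choice $\sigma=m^2$ (so $\tilde C=E-\alpha m^2 = C+(1-\alpha)m^2$) and $G=\Gamma=1$, a direct computation from~\eqref{eq:stableMoments} using $\dot C=\dot E-2m\dot m$ gives
\begin{equation*}
	\dot m = \tilde C\,(y-m), \qquad \dot C = -2(1-\beta)\,\tilde C\,C, \qquad \tilde C = C+(1-\alpha)m^2 .
\end{equation*}
For $\alpha<1$ one has $\tilde C\ge C\ge 0$ on the feasible region, with $\tilde C=0$ only at the origin.

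First I would establish monotonicity and confinement. For an admissible initial datum, $C(0)=E(0)-m(0)^2>0$; as long as $C>0$ we have $\tilde C>0$, hence $\dot C=-2(1-\beta)\tilde C C<0$ because $\beta<1$, so $C$ is strictly decreasing and stays positive, converging to some $C^\ast\ge 0$. Since $\tilde C>0$, $\dot m$ has the sign of $y-m$, so $m$ moves monotonically toward $y$, is trapped between $m(0)$ and $y$, and converges to some $m^\ast$. The forward orbit is therefore bounded, hence global, and $(m(t),C(t))\to(m^\ast,C^\ast)$; being a singleton $\omega$-limit set, this point is an equilibrium. Solving $\tilde C(y-m)=0$ and $\tilde C C=0$ on $\{C\ge 0\}$ with $\alpha<1$ shows the only feasible equilibria in the $(m,C)$-plane are $(y,0)$ and $(0,0)$, i.e.\ $F_y$ and $F_{0,\alpha}$.

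The key step is then to exclude $F_{0,\alpha}$. If $m(0)\ne y$ then, as shown above, $m(t)\ne y$ and $\tilde C(t)>0$ for all finite $t$, so $\dot m\ne 0$ along the orbit and dividing the two equations yields $\dfrac{dC}{dm}=-\dfrac{2(1-\beta)C}{y-m}$, which integrates to
\begin{equation*}
	C(t) = c\,\lvert y-m(t)\rvert^{\,2(1-\beta)}, \qquad c=\frac{C(0)}{\lvert y-m(0)\rvert^{\,2(1-\beta)}}>0 ,
\end{equation*}
the stabilized analogue of $C(m)=c(m-y)^2$. Since $\beta<1$ the exponent $2(1-\beta)$ is positive, so a limit with $m^\ast=0$ would force $C^\ast=c\lvert y\rvert^{2(1-\beta)}>0$ when $y\ne 0$, contradicting $C^\ast=0$; hence $(m^\ast,C^\ast)=(y,0)$. (Substituting the orbit formula back into $\dot m=\big(c\lvert y-m\rvert^{2(1-\beta)}+(1-\alpha)m^2\big)(y-m)$ and bounding the strictly positive bracket below on the compact $m$-interval even gives an exponential rate via Gr\"onwall.) If instead $m(0)=y$, the orbit is $(y,C(t))$ with $C$ decreasing to an equilibrium value, which must be $0$. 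In every case $(m(t),E(t))=(m(t),C(t)+m(t)^2)\to(y,y^2)=F_y$ for any admissible initial condition; combined with the local asymptotic stability of $F_y$ from Proposition~\ref{th:stability}, this yields global asymptotic stability. When $y=0$ the point $F_y$ coincides with the Bogdanov--Takens point $F_{0,\alpha}$ and one only obtains global attractivity, but the convergence argument above still applies.

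I expect the main obstacle to be precisely the exclusion of the spurious equilibrium $F_{0,\alpha}=(0,0)$: this is where the covariance inflation $(1-\alpha)\sigma$ (of full rank) does its work, since it is the explicit orbit $C=c\lvert y-m\rvert^{2(1-\beta)}$ together with $c>0$ and the positivity of the exponent $2(1-\beta)$ that rules out $C\to 0$ at $m=0$. The remaining care is bookkeeping: handling the degenerate initial data $m(0)=y$, the degenerate case $y=0$, and checking that $\alpha<1$ and $\beta<1$ are exactly the hypotheses ensuring $\tilde C>0$ off the origin and a well-behaved, decaying orbit formula.
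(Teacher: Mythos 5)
Your proof is correct, and it takes a genuinely different route from the paper's. The paper stays in the $(m,E)$ variables and argues qualitatively: it asserts that the vector field points into the feasible set for $\alpha<1$ (offering $V(m,E)=(m-y)^2+(E-y^2)^2$ as a Lyapunov-type function), that solutions do not escape to infinity for $\beta<1$, and then invokes the Poincar\'e--Bendixson theorem together with the fact that $F_y$ is the only locally stable equilibrium. You instead pass to $(m,C)$, use the monotonicity of both $C$ and $m$ to get boundedness, global existence and convergence to an equilibrium, and then integrate the orbit equation to obtain $C=c\lvert y-m\rvert^{2(1-\beta)}$, which explicitly excludes convergence to the spurious boundary equilibrium $(0,0)$. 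This exclusion is exactly where your argument is tighter: mere linear instability of a Bogdanov--Takens point does not by itself forbid some trajectories from converging to it, and the paper's candidate Lyapunov function is in fact not monotone on the whole feasible region (e.g.\ with $y=2$, $\beta=0$, at $(m,E)=(0,1)$ one computes $\frac{\mathrm{d}}{\mathrm{d}t}V=4>0$), so the paper's proof leans on the Poincar\'e--Bendixson sketch where your closed-form orbit does concrete work. Your computation also yields the exponential rate as a by-product and isolates the degenerate cases $m(0)=y$ and $y=0$ (the latter is glossed over in the paper too, since then $F_y$ coincides with the non-hyperbolic point and only global attractivity within the feasible region can be claimed). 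What the paper's qualitative approach buys is brevity and easier adaptation to inflations other than $\sigma=m^2$; yours buys an explicit, fully checkable phase-plane argument with quantitative information, at the cost of relying on the closed-form integrability of the orbit equation for this particular choice of $\sigma$.
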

\begin{proof}
	To prove global asymptotic stability for $F_y$ we note that the relevant subset of the phase space in $\R^2$ is bounded by $E = m^2$. It is easy to see that the vector field generated by \eqref{eq:stableMoments} for $\alpha < 1$ is always pointing inwards, i.e.~using a Lyapunov stability argument, there exists a suitable function $V$ defined on $E > m^2$ such that $\frac{\mathrm{d}}{\mathrm{d}t} V(m,E)<0$. For instance, take $V(m,E)=(m-y)^2+(E-y^2)^2$. In addition, for large enough $E$ and $\beta < 1$ solutions do not escape to infinity. Since the only equilibria in this region are on the boundary and since $F_y$ is the only locally stable equilibrium, hence by the Poincar\'{e}--Bendixson Theorem we have the statement.
\end{proof}

\begin{remark}
	\rrevision{As already observed, we do not claim uniqueness of the choice of the covariance inflation that is responsible for the stabilization of the dynamics.}
	Alternatively, \rrevision{another form of stabilization was proposed in~\cite{HertyVisconti2019} where the authors} modified the discrete dynamics with additive white Gaussian noise. This approach leads to a Fokker--Planck--type equation, where Dirac delta distributions are no longer steady states and the desired equilibrium, $F_y$, depends on the nonzero variance $\eta$ of the noise and becomes $(y,y^2\pm \sqrt{2\eta^2})$. Therefore, this form of stabilization does not preserve the desired equilibrium.
\end{remark}

\subsubsection{Decay Rate}
Proposition~\ref{th:stability} and Proposition~\ref{th:globalStability} require $\beta<1$. Therefore, $R(\vec{U})$, introduced in the discrete dynamics~\eqref{eq:stableEnKF}, is not needed to stabilize the phase portrait since $\beta=0$ is admissible. Although an optimal, i.e.~exponential, rate of convergence is already guaranteed by the fact that the desired equilibrium $F_y$ is globally asymptotically stable, we show in this section that the term $R(\vec{U})$ allows to further speed up the convergence to $F_y$. In particular, we observe an improvement with respect to the rate of convergence obtained by the classical continuous--limit of the EKI, which is $\mathcal{O}(t^{-1})$ as observed at the end of Section~\ref{ssec:stabilityEnKF}.

We notice that $F_y$ is also a point of the phase space where $C\equiv 0$, as it happens for the classical EKI formulation. Therefore, in order to study the convergence rate to $F_y$, we study the decay speed of the variance $C(t)=E-m^2$ at $0^+$.
For the stabilized version of the ensemble dynamics, cf.~\eqref{eq:stableMoments}, we compute
\begin{align*}
	\frac{\mathrm{d}}{\mathrm{d}t} C = \frac{\mathrm{d}}{\mathrm{d}t} E - 2 m \frac{\mathrm{d}}{\mathrm{d}t} m &= -2(1-\beta)\tilde{C} C \leq -2(1-\beta)(1-\alpha)m^2C
\end{align*}
provided $\beta<1$. Applying Gronwall inequality we obtain
\begin{align*}
	C(t) \leq C(0)\exp\left( -2(1-\beta)(1-\alpha)\int_0^t m^2(s)\mathrm{d}s \right)
\end{align*}
which implies exponential rate of decay to $0^+$ for $t\to\infty$. In particular, we observe that the exponential decay can be obtained without the acceleration term $R(\vec{U})$ as well, i.e.~taking $\beta=0$, but the decay is faster for $\beta<0$.

We conclude that, while $\alpha$ plays the role of a bifurcation and stabilization parameter leading to a change of the equilibria in the phase space, $\beta$ plays the role of a relaxation or acceleration parameter speeding--up the convergence to the desired equilibrium $F_y$. Therefore, the stabilized EKI proposed in this work has two advantages: it is robust and the approach to the right equilibrium is exponentially fast.

\subsection{\rrevision{Multi--Dimensional Stabilization}} \label{ssec:varianceInflation}
\rrevision{For the choice $\sigma=m^2$ made in the previous section, the corresponding multi--dimensional inflation of the covariance matrix would be written as $\tilde{\vec{C}} = \vec{C} + (1-\alpha) \vecsym{\Sigma}$ with $\vecsym{\Sigma} = \vec{m}\vec{m}^T$, and thus $\tilde{\vec{C}} = \vec{E} - \alpha \vec{m}\vec{m}^T$. In this case, $\vec{\Sigma}$ is a rank--one matrix. We will show that in the general dimension setting $\vecsym{\Sigma}$ is required to be a full--rank matrix in order to have an unfolding which makes the ensemble dynamics hyperbolic.}

\rrevision{To this end, let us first consider $\vecsym{\Sigma} = \vec{m}\vec{m}^T$. With the same assumptions of Section~\ref{sssec:multidStabilityEKI} we compute the $d+d^2$ dynamical system of the moments of~\eqref{eq:stableLinearEnKF} taking $\beta=0$ :
	\begin{align*}
		\frac{\mathrm{d}}{\mathrm{d}t} \vec{m}(t) &= (\vec{E} - \alpha \vec{m} \vec{m}^T) (\vec{y}-\vec{m}) \\
		\frac{\mathrm{d}}{\mathrm{d}t} \vec{E}(t) &= (\vec{E}- \alpha \vec{m} \vec{m}^T) (\vec{y} \vec{m}^T - \vec{E}) + (\vec{m} \vec{y}^T - \vec{E}) (\vec{E}- \alpha \vec{m} \vec{m}^T).
	\end{align*}
	In terms of $(\vec{m},\vec{C})$, where we recall that $\vec{C}=\vec{E}-\vec{m} \vec{m}^T$, we have
	\begin{equation} \label{eq:momentsQuasiStable}
		\begin{aligned}
			\frac{\mathrm{d}}{\mathrm{d}t} \vec{m}(t) &= \vec{C} (\vec{y}-\vec{m}) + (1-\alpha) \vec{m} \vec{m}^T (\vec{y}-\vec{m})\\
			\frac{\mathrm{d}}{\mathrm{d}t} \vec{C}(t) &= -2\vec{C} \vec{C} - (1-\alpha) \vec{m} \vec{m}^T \vec{C} - (1-\alpha) \vec{C} \vec{m} \vec{m}^T.
		\end{aligned}
	\end{equation}
}

\rrevision{We focus on the target equilibrium $(\vec{m}^*,\vec{C}^*)=(\vec{y},\vec{0})$ which is still a critical point of the moment dynamics~\eqref{eq:momentsQuasiStable}. The problem of the zero eigenvalue which makes non--hyperbolic the target equilibrium can be illustrated already using just the equation for $\dot{\vec{m}}$. For $\vec{m} = \vec{m}^* + \delta\vec{m}$ and $\vec{C} = \vec{C}^* + \delta\vec{C}$, the perturbation to first order becomes
	$$
	\frac{\mathrm{d}}{\mathrm{d}t} \delta\vec{m}(t) = -(1-\alpha) \vec{y} \vec{y}^T \delta \vec{m}.
	$$
	Then a zero eigenvalue occurs if the matrix $(1-\alpha) \vec{y} \vec{y}^T$ is singular and hence any vector for $\delta \vec{m}$ that is in the kernel of this matrix has a zero eigenvalue. Thus, for any choice of $\alpha$ there are $d-1$ zero eigenvalues since $\vec{y} \vec{y}^T$ has rank 1.}

\rrevision{Similarly, for the $\dot{\vec{C}}$ equation, we get
	$$
	\frac{\mathrm{d}}{\mathrm{d}t} \delta\vec{C}(t) = (1- \alpha) \vec{y} \vec{y}^T \delta \vec{C}  - (1-\alpha) \delta \vec{C} \vec{y} \vec{y}^T,
	$$
	and again, since $\vec{y} \vec{y}^T$ is a rank 1 matrix, we have $d-1$ zero eigenvalues. 
	Hence, for the two moment equations in $d$ dimensions, i.e.~for $d^2 + d$ equations, we have $2(d-1)$ zero eigenvalues corresponding to the target equilibrium $(\vec{m}^*,\vec{C}^*)$.}

\rrevision{We conclude that an inflation such as $\vecsym{\Sigma} = \vec{m}\vec{m}^T$ can not make the target equilibrium hyperbolic. The next goal is to show that the problem of this choice is that $\Sigma$ is a rank--one matrix. In fact, for a general $\vecsym{\Sigma}$, the moment dynamics~\eqref{eq:momentsQuasiStable} writes
	\begin{equation*}
		\begin{aligned}
			\frac{\mathrm{d}}{\mathrm{d}t} \vec{m}(t) &= \vec{C} (\vec{y}-\vec{m}) + (1-\alpha) \vecsym{\Sigma} (\vec{y}-\vec{m})\\
			\frac{\mathrm{d}}{\mathrm{d}t} \vec{C}(t) &= -2\vec{C} \vec{C} - (1-\alpha) \vecsym{\Sigma} \vec{C} - (1-\alpha) \vec{C} \vecsym{\Sigma},
		\end{aligned}
	\end{equation*}
	and the linearization at the target equilibrium $(\vec{m}^*,\vec{C}^*)$ is
	\begin{align*}
		\frac{\mathrm{d}}{\mathrm{d}t} \delta\vec{m}(t) &=  - (1-\alpha) \vecsym{\Sigma}  \delta \vec{m} \\
		\frac{\mathrm{d}}{\mathrm{d}t} \delta\vec{C}(t) &= - 4 (1-\alpha) \vecsym{\Sigma} \delta \vec{C}.
	\end{align*}
	Hence for $\alpha<1$ and $\vecsym{\Sigma}$ full--rank the target equilibrium becomes hyperbolic.}

\subsection{Properties of the Ensemble Dynamics} \label{ssec:analysisEnsemble}
It is possible to provide a gradient flow interpretation also for the stabilized dynamics~\eqref{eq:stableLinearEnKF}. In fact, we observe that each ensemble is solving a preconditioned gradient descent equation of the type
\begin{equation*}
	\begin{aligned}
		\frac{\mathrm{d}}{\mathrm{d}t} \vec{u}^j &= -{\bf \tilde{C}}(\vec{U}) \nabla_\vec{u} \Psi(\vec{u}^j,\vec{y},\vec{u}^{-j}) \\
		\Psi(\vec{u}^j,\vec{y},\vec{u}^{-j}) &= \Phi(\vec{u}^j,\vec{y}) - \frac{J \beta}{2(J-1)} \| \vec{u}^j - \bar{\vec{u}} \|^2,
	\end{aligned}
\end{equation*}
where we denote $\vec{u}^{-j} = \{ \vec{u}^k \}_{\substack{k=1 \\ k\neq j}}^J$. Within this formulation we see that our modified dynamics  again adds a  regularization term.
Existence and uniqueness of solutions to~\eqref{eq:stableLinearEnKF} is straightforward since the right--hand side is locally Lipschitz in $\vec{u}^j$, thus local existence of a solution in the space $\mathcal{C}([0,T))$ holds for some $T>0$. We need to prove global existence, namely that the solution does not blow up in finite time, and this is guaranteed by Proposition~\ref{th:spread} below.

We define for each $j=1,\dots,J$
\begin{align} 
	\vec{e}^j(t) &= \vec{u}^j(t) - \bar{\vec{u}}(t), \label{eq:spread} \\
	\vec{r}^j(t) &= \vec{u}^j(t) - \vec{u}^* \label{eq:residual}
\end{align}
the ensemble spread and the residual to a value $\vec{u}^*$, respectively. Proposition~\ref{th:spread} gives  sufficient conditions for the existence of a monotonic decay for the ensemble spread.

\begin{proposition} \label{th:spread}
	Let $\vec{u}^{j}(0)\in\R^d$, $j=1,\dots,J$, be an admissible initial condition of the dynamical system~\eqref{eq:stableLinearEnKF}. The quantity $\left\| \vec{e}^j(t) \right\|^2$ is decreasing in time, i.e.
	$\left\| \vec{e}^j(t) \right\|^2 \leq \left\| \vec{e}^j(0) \right\|^2$, for each $j=1,\dots,J$ and $t \geq 0$, provided that $\alpha<1$ and $\beta<\min_k \lambda_{\vec{G}^T\vec{\Gamma}\vec{G}}^k$, where $\lambda_{\vec{G}^T\vec{\Gamma}\vec{G}}^k$ denotes the eigenvalues of $\vec{G}^T\vec{\Gamma}\vec{G}$. In particular, if $\vec{G}^T\vecsym{\Gamma}\vec{G}$ is positive definite then $\lim_{t\to\infty} \left\| \vec{e}^j(t) \right\|^2 = 0$.
\end{proposition}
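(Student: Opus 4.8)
The plan is to derive a closed differential inequality for $\|\vec{e}^j(t)\|^2$ and then apply a Grönwall-type argument. First I would write down the evolution equation for the spread $\vec{e}^j = \vec{u}^j - \bar{\vec{u}}$. Subtracting the average of~\eqref{eq:stableLinearEnKF} over $j$ from the equation for $\vec{u}^j$, and using that $\nabla_\vec{u}\Phi(\vec{u},\vec{y}) = -\vec{G}^T\vecsym{\Gamma}(\vec{y}-\vec{G}\vec{u})$ is affine in $\vec{u}$ so that its average is $\nabla_\vec{u}\Phi(\bar{\vec{u}},\vec{y})$, the data-misfit term contributes $-\tilde{\vec{C}}(\vec{U})\,\vec{G}^T\vecsym{\Gamma}\vec{G}\,\vec{e}^j$, while the relaxation term $R(\vec{U}) = \beta\tilde{\vec{C}}(\vec{U})(\vec{u}^j-\bar{\vec{u}})$ contributes $\beta\tilde{\vec{C}}(\vec{U})\vec{e}^j$ (its average vanishes since $\sum_j \vec{e}^j = \vec{0}$). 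Hence
\begin{equation*}
	\frac{\mathrm{d}}{\mathrm{d}t}\vec{e}^j = -\tilde{\vec{C}}(\vec{U})\left(\vec{G}^T\vecsym{\Gamma}\vec{G} - \beta\vec{I}\right)\vec{e}^j.
\end{equation*}

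Next I would compute $\frac{\mathrm{d}}{\mathrm{d}t}\|\vec{e}^j\|^2 = 2\langle \vec{e}^j, \dot{\vec{e}}^j\rangle = -2\,(\vec{e}^j)^T\tilde{\vec{C}}(\vec{U})\left(\vec{G}^T\vecsym{\Gamma}\vec{G}-\beta\vec{I}\right)\vec{e}^j$. The key structural facts are: $\tilde{\vec{C}}(\vec{U}) = \vec{C}(\vec{U}) + (1-\alpha)\vecsym{\Sigma}$ is symmetric positive (semi-)definite when $\alpha<1$ and $\vecsym{\Sigma}$ is chosen positive definite (or at least so that the inflation keeps $\tilde{\vec{C}}$ PSD along admissible trajectories, which is exactly the admissibility condition $E\ge m^2$ lifted to the matrix setting), and $\vec{G}^T\vecsym{\Gamma}\vec{G}-\beta\vec{I}$ is symmetric with eigenvalues $\lambda^k_{\vec{G}^T\vecsym{\Gamma}\vec{G}} - \beta$, which are all nonnegative precisely under the hypothesis $\beta < \min_k \lambda^k_{\vec{G}^T\vecsym{\Gamma}\vec{G}}$. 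Since a product of a PSD matrix and a PSD matrix need not itself be PSD, I would instead factor $\tilde{\vec{C}}^{1/2}$ and rewrite the quadratic form as $-2\,(\tilde{\vec{C}}^{1/2}\vec{e}^j)^T(\vec{G}^T\vecsym{\Gamma}\vec{G}-\beta\vec{I})(\tilde{\vec{C}}^{1/2}\vec{e}^j) \le 0$, using that $\tilde{\vec{C}}$ is symmetric PSD so its square root exists and is symmetric. This gives monotonic decay $\|\vec{e}^j(t)\|^2 \le \|\vec{e}^j(0)\|^2$ immediately, and since the spread stays bounded the right-hand side of~\eqref{eq:stableLinearEnKF} cannot blow up, so the solution is global.

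For the final claim I would use positive definiteness of $\vec{G}^T\vecsym{\Gamma}\vec{G}$: then $\vec{G}^T\vecsym{\Gamma}\vec{G}-\beta\vec{I} \succeq (\mu-\beta)\vec{I}$ with $\mu = \min_k\lambda^k > 0$ and $\mu-\beta>0$, while $\tilde{\vec{C}}(\vec{U}) \succeq (1-\alpha)\vecsym{\Sigma} \succeq (1-\alpha)\sigma_{\min}\vec{I} =: c_0\vec{I}$ with $c_0>0$ (using $\vecsym{\Sigma}$ full-rank, as flagged in Section~\ref{ssec:varianceInflation}). Combining, $\frac{\mathrm{d}}{\mathrm{d}t}\|\vec{e}^j\|^2 \le -2c_0(\mu-\beta)\|\vec{e}^j\|^2$, and Grönwall gives exponential decay to $0$. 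The main obstacle is the sign of the quadratic form: the product $\tilde{\vec{C}}(\vec{G}^T\vecsym{\Gamma}\vec{G}-\beta\vec{I})$ is generally not symmetric, so one must be careful to symmetrize via the square root of $\tilde{\vec{C}}$ rather than argue naively about eigenvalues of the product; a secondary subtlety is verifying that $\tilde{\vec{C}}(\vec{U})$ indeed remains positive semidefinite along the flow, which for the rank-one choice $\vecsym{\Sigma}=\vec{m}\vec{m}^T$ reduces to admissibility ($\vec{E}\succeq\vec{m}\vec{m}^T$ becomes $\vec{E}\succeq\alpha\vec{m}\vec{m}^T$), and for a full-rank $\vecsym{\Sigma}$ is automatic.
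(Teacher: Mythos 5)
Your evolution equation is exactly the one the paper's proof rests on: subtracting the mean equation gives $\frac{\mathrm{d}}{\mathrm{d}t}\vec{e}^j=-\tilde{\vec{C}}(\vec{U})\,\vec{P}_\beta\,\vec{e}^j$ with $\vec{P}_\beta=\vec{G}^T\vecsym{\Gamma}\vec{G}-\beta\vec{I}_d$, and the hypotheses enter through $\tilde{\vec{C}}\succeq 0$ (from $\alpha<1$) and $\vec{P}_\beta\succ 0$ (from $\beta<\min_k\lambda^k_{\vec{G}^T\vec{\Gamma}\vec{G}}$); the paper differentiates the ensemble average $\frac1J\sum_j\|\vec{e}^j\|^2$ and asserts the sign of the same quadratic form. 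The genuine gap is in your key sign step. You rightly note that the product of two positive (semi)definite matrices need not be positive semidefinite, but your repair is not an identity: $(\vec{e}^j)^T\tilde{\vec{C}}\vec{P}_\beta\vec{e}^j$ is \emph{not} equal to $(\tilde{\vec{C}}^{1/2}\vec{e}^j)^T\vec{P}_\beta(\tilde{\vec{C}}^{1/2}\vec{e}^j)=(\vec{e}^j)^T\tilde{\vec{C}}^{1/2}\vec{P}_\beta\tilde{\vec{C}}^{1/2}\vec{e}^j$ unless $\tilde{\vec{C}}^{1/2}$ and $\vec{P}_\beta$ commute; inserting the square root symmetrizes a \emph{different} quadratic form (one that would control the weighted quantity $\|\tilde{\vec{C}}^{-1/2}\vec{e}^j\|^2$, and even that only after accounting for $\frac{\mathrm{d}}{\mathrm{d}t}\tilde{\vec{C}}$). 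Consequently neither the monotonicity of $\|\vec{e}^j(t)\|^2$ nor the final Gronwall bound $\frac{\mathrm{d}}{\mathrm{d}t}\|\vec{e}^j\|^2\le-2c_0(\mu-\beta)\|\vec{e}^j\|^2$ follows from the argument as written.

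A correct way to handle at least the covariance part is the route the paper implicitly takes: sum over $j$, so that $\frac1J\sum_j\langle\vec{e}^j,\vec{C}\vec{P}_\beta\vec{e}^j\rangle=\mathrm{tr}(\vec{P}_\beta\vec{C}^2)=\|\vec{P}_\beta^{1/2}\vec{C}\|_F^2\ge0$, which works precisely because $\vec{C}=\frac1J\sum_k\vec{e}^k(\vec{e}^k)^T$ is built from the same vectors (the Schillings--Stuart argument). The inflation part then contributes $(1-\alpha)\,\mathrm{tr}(\vecsym{\Sigma}\vec{P}_\beta\vec{C})$, and a trace of a product of three symmetric positive semidefinite matrices can be negative (already for full--rank $\vecsym{\Sigma},\vec{P}_\beta$ and rank--one $\vec{C}$ it equals $\vec{c}^T\vecsym{\Sigma}\vec{P}_\beta\vec{c}$, whose sign can fail when $\vecsym{\Sigma}$ and $\vec{P}_\beta$ do not commute), so its nonnegativity needs an additional structural assumption, e.g.\ $\vecsym{\Sigma}$ a multiple of the identity or commuting with $\vec{G}^T\vecsym{\Gamma}\vec{G}$ -- a point the paper's own one--line inequality also passes over, and which is automatic only in the scalar setting $d=1$ of Section~\ref{ssec:stabilitySEnKF}. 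So: correct evolution equation and correct identification of the relevant hypotheses, but the decisive inequality is not established; you need either such a commutativity/isotropy assumption on $\vecsym{\Sigma}$, or a norm adapted to $\tilde{\vec{C}}$ together with control of its time derivative, rather than the square--root ``rewriting''.
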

\begin{proof}
	To prove the statement, we proceed similarly to existing theory.
	Let us denote $\vec{I}_d\in\R^{d\times d}$ the identity matrix. The hypothesis $\alpha<1$ implies $\tilde{\vec{C}}$ positive definite for all $t\geq 0$.
	We compute
	\begin{align*}
		\frac12 \frac{\mathrm{d}}{\mathrm{d}t} \frac{1}{J} \sum_{j=1}^J \| \vec{e}^j(t) \|^2 = -\frac{1}{J} \sum_{j=1}^J \langle \vec{e}^j(t),\tilde{\vec{C}}(t) \vec{P}_\beta(t) \vec{e}^j(t) \rangle \leq 0
	\end{align*}
	where $\vec{P}_\beta = \vec{G}^T \vecsym{\Gamma} \vec{G} - \beta \vec{I}_d$. Sufficient condition for the last inequality being strict is $\vec{G}^T\vecsym{\Gamma}\vec{G}$ positive definite and $\beta<\min_k \lambda_{\vec{G}^T\vec{\Gamma}\vec{G}}^k$.
\end{proof}

The previous result establishes sufficient conditions for the ensemble collapse to the mean $\bar{\vec{u}}$ in the long time behavior, and consequently each ensemble member solves at equilibrium the same minimization problem that $\bar{\vec{u}}$ is solving. With this consideration we state and prove the following result on the convergence of the residual in the control space.

\begin{proposition} \label{th:residual}
	Let $\vec{u}^{j}(0)\in\R^d$, $j=1,\dots,J$, be an admissible initial condition of the dynamical system~\eqref{eq:stableLinearEnKF}. Assume that $\vec{G}^T \vecsym{\Gamma} \vec{G}$ is positive definite and let $\vec{u}^*$ be a KKT point of the minimization problem $\min_{\vec{u}\in\R^d} \Phi(\vec{u},\vec{y})$. Then $\lim_{t\to\infty} \left\| \vec{r}^j(t) \right\|^2 = 0$, for each $j=1,\dots,J$, provided that $\alpha<1$ and $\beta<\min_k \lambda_{\vec{G}^T\vec{\Gamma}\vec{G}}^k$, where $\lambda_{\vec{G}^T\vec{\Gamma}\vec{G}}^k$ denotes the eigenvalues of $\vec{G}^T\vec{\Gamma}\vec{G}$.
\end{proposition}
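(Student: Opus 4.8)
The plan is to reduce the statement to two facts: the ensemble collapses onto its mean, and the mean converges to $\vec{u}^*$. Writing $\vec{s}(t):=\bar{\vec{u}}(t)-\vec{u}^*$ we have $\vec{r}^j(t)=\vec{e}^j(t)+\vec{s}(t)$, hence $\|\vec{r}^j(t)\|\le\|\vec{e}^j(t)\|+\|\vec{s}(t)\|$; the term $\|\vec{e}^j(t)\|$ tends to zero by Proposition~\ref{th:spread}, whose hypotheses ($\vec{G}^T\vecsym{\Gamma}\vec{G}$ positive definite, $\alpha<1$, $\beta<\min_k\lambda_{\vec{G}^T\vec{\Gamma}\vec{G}}^k$) are exactly those assumed here, so everything comes down to controlling $\vec{s}$.

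First I would derive a closed evolution equation for the mean. Averaging~\eqref{eq:stableLinearEnKF} over $j=1,\dots,J$, using $\nabla_\vec{u}\Phi(\vec{u},\vec{y})=-\vec{G}^T\vecsym{\Gamma}(\vec{y}-\vec{G}\vec{u})$ and $\tfrac1J\sum_j(\vec{u}^j-\bar{\vec{u}})=\vec{0}$, the relaxation term $R(\vec{U})$ disappears from the mean and one is left with
$$
\frac{\mathrm{d}}{\mathrm{d}t}\bar{\vec{u}}=\tilde{\vec{C}}(\vec{U})\,\vec{G}^T\vecsym{\Gamma}\bigl(\vec{y}-\vec{G}\bar{\vec{u}}\bigr).
$$
Since $\vec{u}^*$ is a KKT point of $\min_\vec{u}\Phi(\vec{u},\vec{y})$ it satisfies $\vec{G}^T\vecsym{\Gamma}(\vec{y}-\vec{G}\vec{u}^*)=\vec{0}$ (and, as $\vec{A}:=\vec{G}^T\vecsym{\Gamma}\vec{G}$ is positive definite, it is in fact the unique such point, so the limit is unambiguous); subtracting this identity yields the state-dependent linear ODE $\dot{\vec{s}}=-\tilde{\vec{C}}(\vec{U})\,\vec{A}\,\vec{s}$.

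Then I would close the argument with a Lyapunov functional in the $\vec{A}$-weighted norm, $W(t):=\tfrac12\langle\vec{s},\vec{A}\vec{s}\rangle$ — the plain Euclidean norm does not work directly because $\tilde{\vec{C}}\vec{A}$ is not symmetric. Differentiating, $\dot W=-\langle\vec{A}\vec{s},\tilde{\vec{C}}(\vec{A}\vec{s})\rangle$; since $\alpha<1$ gives $\tilde{\vec{C}}=\vec{C}(\vec{U})+(1-\alpha)\vecsym{\Sigma}\succeq(1-\alpha)\vecsym{\Sigma}\succ0$ uniformly in $t$ (the same observation used in the proof of Proposition~\ref{th:spread}), one obtains $\dot W\le-(1-\alpha)\lambda_{\min}(\vecsym{\Sigma})\,\|\vec{A}\vec{s}\|^2$, and bounding $\|\vec{A}\vec{s}\|^2\ge\lambda_{\min}(\vec{A})^2\|\vec{s}\|^2$ and $W\le\tfrac12\lambda_{\max}(\vec{A})\|\vec{s}\|^2$ gives $\dot W\le-cW$ for some $c>0$. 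Gronwall then yields $W(t)\le W(0)e^{-ct}\to0$, hence $\|\vec{s}(t)\|\to0$, and combined with Proposition~\ref{th:spread} this gives $\|\vec{r}^j(t)\|^2\to0$ for each $j$. The averaging computation and the spectral estimates are mechanical; the only genuine decision points are the choice of the weighted norm (equivalently, reading $\tilde{\vec{C}}\vec{A}\vec{s}=\tilde{\vec{C}}(\vec{A}\vec{s})$ and using positive definiteness of $\tilde{\vec{C}}$ on the vector $\vec{A}\vec{s}$) and the use of the uniform lower bound $\tilde{\vec{C}}\succeq(1-\alpha)\vecsym{\Sigma}$, which in turn needs $\vecsym{\Sigma}$ positive definite.
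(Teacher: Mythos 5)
Your proposal is correct and takes essentially the same route as the paper: reduce to the mean via Proposition~\ref{th:spread}, average the ensemble dynamics (the relaxation term cancels) to obtain $\frac{\mathrm{d}}{\mathrm{d}t}\bar{\vec{u}} = -\tilde{\vec{C}}\,\nabla_{\vec{u}}\Phi(\bar{\vec{u}},\vec{y})$, and use positive definiteness of $\tilde{\vec{C}}$ together with the uniqueness of the KKT point to conclude. The only difference is that the paper stops at the qualitative statement that at equilibrium the mean satisfies $\nabla_{\vec{u}}\Phi(\bar{\vec{u}},\vec{y})=\vec{0}$, whereas your $\vec{A}$-weighted Lyapunov/Gronwall step supplies the explicit (exponential) convergence of $\bar{\vec{u}}$ to $\vec{u}^*$; your requirement that $\vecsym{\Sigma}$ be positive definite is the same assumption the paper makes implicitly when it asserts that $\alpha<1$ renders $\tilde{\vec{C}}$ positive definite.
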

\begin{proof}
	By assumption $\vec{G}^T \vecsym{\Gamma} \vec{G}$ is positive definite and thus we have a unique global minimizer $\vec{u}^*$ of the minimization problem $\min_{\vec{u}\in\R^d} \Phi(\vec{u},\vec{y})$, for a given $\vec{y}\in\R^K$. Moreover, for Proposition~\eqref{th:spread} it is sufficient to show that $\|\bar{\vec{u}} - \vec{u}^* \| \to 0$ as $t\to\infty$. The evolution equation of the ensemble mean is given by
	$$
	\frac{\mathrm{d}}{\mathrm{d}t} \bar{\vec{u}} = - \tilde{\vec{C}} \nabla_{\vec{u}} \Phi(\bar{\vec{u}},\vec{y}).
	$$
	Then, since $\tilde{\vec{C}}$ is positive definite, at equilibrium the ensemble mean solves the equation $\nabla_{\vec{u}} \Phi(\bar{\vec{u}},\vec{y}) = 0$.
\end{proof}
%
%

\section{Numerical Simulations} \label{sec:numerics}

The simulations performed in this section are all obtained by the numerical solution of the ODE systems for the moments and for the ensemble dynamics.
For the sake of simplicity, and also to show that no suitable robust discretization is needed, we straightforwardly employ a first order explicit time integration with a fixed and small time step.
We observe that the mean--field limit would also allow to use a fast stochastic particle scheme, e.g.~the mean--field interaction algorithm, see~\cite{HertyVisconti2019} for application to the EKI, inspired on Direct Simulation Monte Carlo (DSMC) methods for kinetic equations.

\subsection{Simulation of the Moment Dynamics}

We recall that the stabilization of the continuous--time limit of the ensemble Kalman inversion is motivated by a linear stability analysis of the moment equations, in the simplest case of a one--dimensional control. For this reason, we aim to compare the moment dynamics provided by the ensemble Kalman inversion~\eqref{eq:continuousEnKF1} and by the present stabilization of the method~\eqref{eq:stableEnKF}. 

All simulations run with the same parameters used for the stability analysis in Section~\ref{ssec:stabilitySEnKF}, namely we consider $G=\Gamma=1$. The stabilization parameter is $\alpha=0.1$ and the acceleration parameter is $\beta=-1$. Moreover, we set $y=2$ so that the target equilibrium is $F_y = (2,4)$.

\begin{figure}[t!]
	\centering
	\begin{subfigure}[t]{0.49\textwidth}
		\centering
		\includegraphics[width=\textwidth]{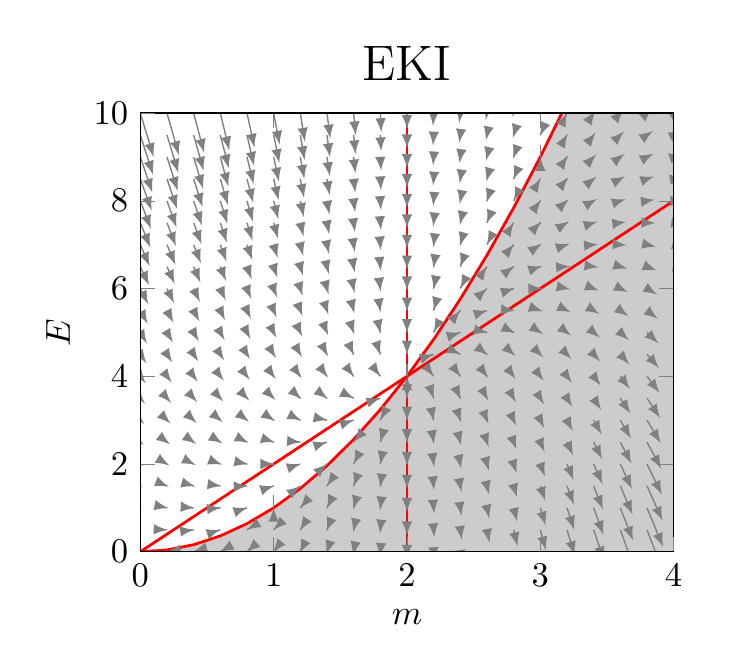}
		\caption{The classical ensemble Kalman inversion~\eqref{eq:momentODEs}.\label{fig:phasePlaneA}}
	\end{subfigure}
	\begin{subfigure}[t]{0.49\textwidth}
		\centering
		\includegraphics[width=\textwidth]{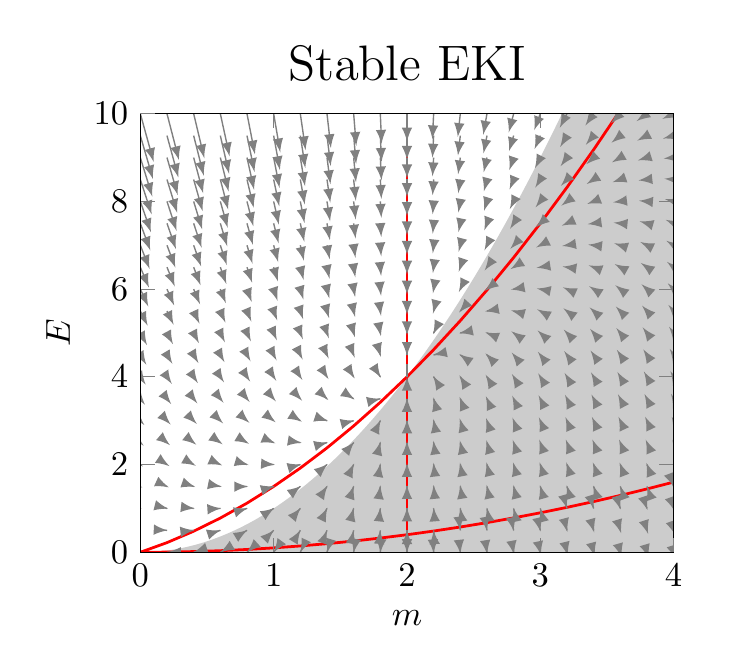}
		\caption{The stabilized ensemble Kalman inversion~\eqref{eq:stableMoments}.\label{fig:phasePlaneB}}
	\end{subfigure}
	\caption{Phase planes of the moment systems. Red lines are nullclines, the gray--shaded area represents the unfeasible region.\label{fig:phasePlane}}
\end{figure}

In Figure~\ref{fig:phasePlane} we show the phase portraits with the velocity field of the moment equations. The red lines are nullclines, and the gray--shaded area represents the unfeasible region where $E<m^2$. We observe that the stabilized version of the EKI proposed in this work preserves the target equilibrium $F_y$. In the classical EKI, Figure~\ref{fig:phasePlaneA}, the nullcline on the border of the feasible region is a set of equilibrium points. The stabilization moves these equilibria on the red nullcline in the unfeasible region, see Figure~\ref{fig:phasePlaneB}.

To provide additional numerical insight, we study the effect of the stabilization on the moment equations  by looking at the time behavior of the covariance $C(t)=E-m^2$ for two different initial conditions in both systems, the limit of the classical EKI and the stabilized version. In addition, for a thorough comparison, we take into account also the dynamics \rrevision{with $\beta=0$ in order to highlight the effect of the acceleration term.} 

\begin{figure}[t!]
	\centering
	\includegraphics[width=\textwidth]{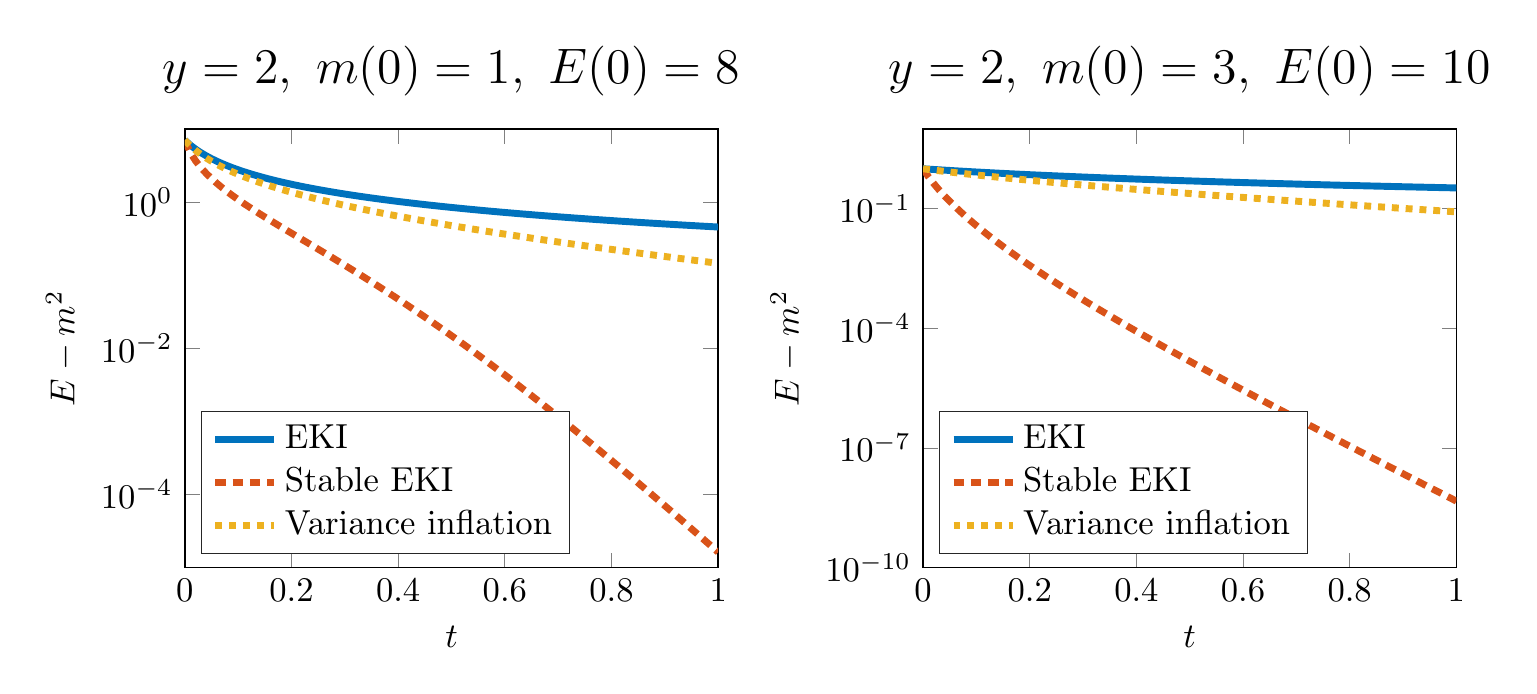}
	\caption{Variance evolution computed by solving the moment systems for the limit of the classical ensemble Kalman inversion~\eqref{eq:momentODEs}, the stabilized ensemble Kalman inversion~\eqref{eq:stableMoments}, and for the variance inflation only, i.e.~with $\beta=0$. We consider two
		different sets of initial conditions.\label{fig:variance}}
\end{figure}

The initial conditions of the first moments are $m(0)=1$ and $m(0)=3$. The initial energy $E(0)$ is 
chosen such that  $(m(0),E(0))$ is in the feasible region $C(0)\geq 0$. In Figure~\ref{fig:variance}, we observe that all the methods show a variance decay to zero, i.e.~collapse to a Dirac delta at mean--field level. However, noting the logarithmic scale in Figure~\ref{fig:variance} we see that the stable methods, characterized by variance inflation, decay to the equilibrium state much faster, precisely exponentially, than the limit of the classical EKI. Furthermore, we emphasize the role of the acceleration parameter $\beta$, which is taken into account only in the stable EKI method and which further speeds up the convergence speed. 

\subsection{A Two--Dimensional Inverse Problem}

We consider the inverse problem of finding the hydraulic conductivity function of a non--linear elliptic equation in two spatial dimension assuming that noisy observation of the solution to the problem are available.

The problem is described by the following PDE modeling groundwater flow in a two--dimensional confined aquifer:
\begin{equation} \label{eq:groundwater}
	\begin{aligned}
		-\nabla\cdot\left( e^{\log K} \nabla p \right) &= f \quad \mbox{ in } \ \Omega=(-1,1)^2 \\
		p &= 0 \quad \mbox{ on } \ \partial\Omega.
	\end{aligned}
\end{equation}
Here, $K$ is the hydraulic conductivity, $f$ is the force function and the flow is described in terms of the piezometric head $p$. This problem has been intensively used in the literature on the ensemble Kalman inversion to study performance of the method. E.g.~see~\cite{iglesiaslawstuart2013,ChadaStuartTong2019,schillingsstuart2017}.

We aim to find the log conductivity $u=\log K$ from $400$ observations of the solution $p$ on a uniform grid in $\Omega$. We choose $f = 100$. The mapping from $u$ to these observations is now non--linear, and thus we need to employ the ensemble dynamics for the non--linear model~\eqref{eq:stableEnKF}.

Noise is assumed to be Gaussian distributed with covariance $\mathbf{\Gamma}^{-1}=\gamma^2 \mathbf{I}$, with $\gamma=4$. The prior is also Gaussian distributed with covariance $(-\Delta)^{-2}$, whose discretization is again computed by using homogeneous Dirichlet boundary conditions. We use a $\mathbb{P}^1$ FEM approximation. The ensemble size is chosen as $J=100$. The ensemble dynamics~\eqref{eq:stableEnKF} are numerically solved by explicit Euler discretization with fixed and, to avoid stability issues, small time step $\Delta t=10^{-3}$. 

Final time for the simulations is determined by a stopping criterion
in order to avoid over--fitting of the method. We employ the discrepancy principle as stopping criterion. Thus, we check and stop the simulation when the condition $\vartheta \leq \| \vecsym{\eta} \|^2$ is satisfied, where $\vecsym{\eta}$ is the measurement noise and
\begin{equation} \label{eq:misfit}
	\vartheta = \frac{1}{J} \sum_{j=1}^J \| \G(\vec{u}^j) - \vec{p} - \vecsym{\eta} \|^2
\end{equation}
is the misfit which allows to measure the quality of the solution at each iteration. Moreover, $\vec{u}^j$ and $\vec{p}$ are vectors containing the discrete values of the control for the $j$--th ensemble member and of the true observations, respectively. In this example $\G$ is the $\mathbb{P}^1$ FEM discretization of the continuous operator defining the elliptic PDE~\eqref{eq:groundwater}.

The initial ensemble is drawn from a Gaussian distribution with given covariance matrix $\delta(-\Delta)^{-2}$, and we consider $\delta=1$ and $\delta=10^{-2}$. We compare results obtained with the continuous--time limit of the classical EKI, i.e.~when $\alpha=1$ and $\beta=0$, and with the stabilized method, using \rrevision{$\alpha=0.1$ and $\beta=-10$ when $\delta=1$, and $\alpha=0.9$ and $\beta=-0.1$ when $\delta=10^{-2}$. The inflation of the covariance is performed with $\vecsym{\Sigma}=\bar{\vecsym{\Sigma}}\bar{\vecsym{\Sigma}}^T$ where $\bar{\vecsym{\Sigma}}$ is full-rank matrix which contains pseudorandom values drawn from the standard normal distribution.}

\begin{figure}[!t]
	\centering
	\includegraphics[width=\textwidth]{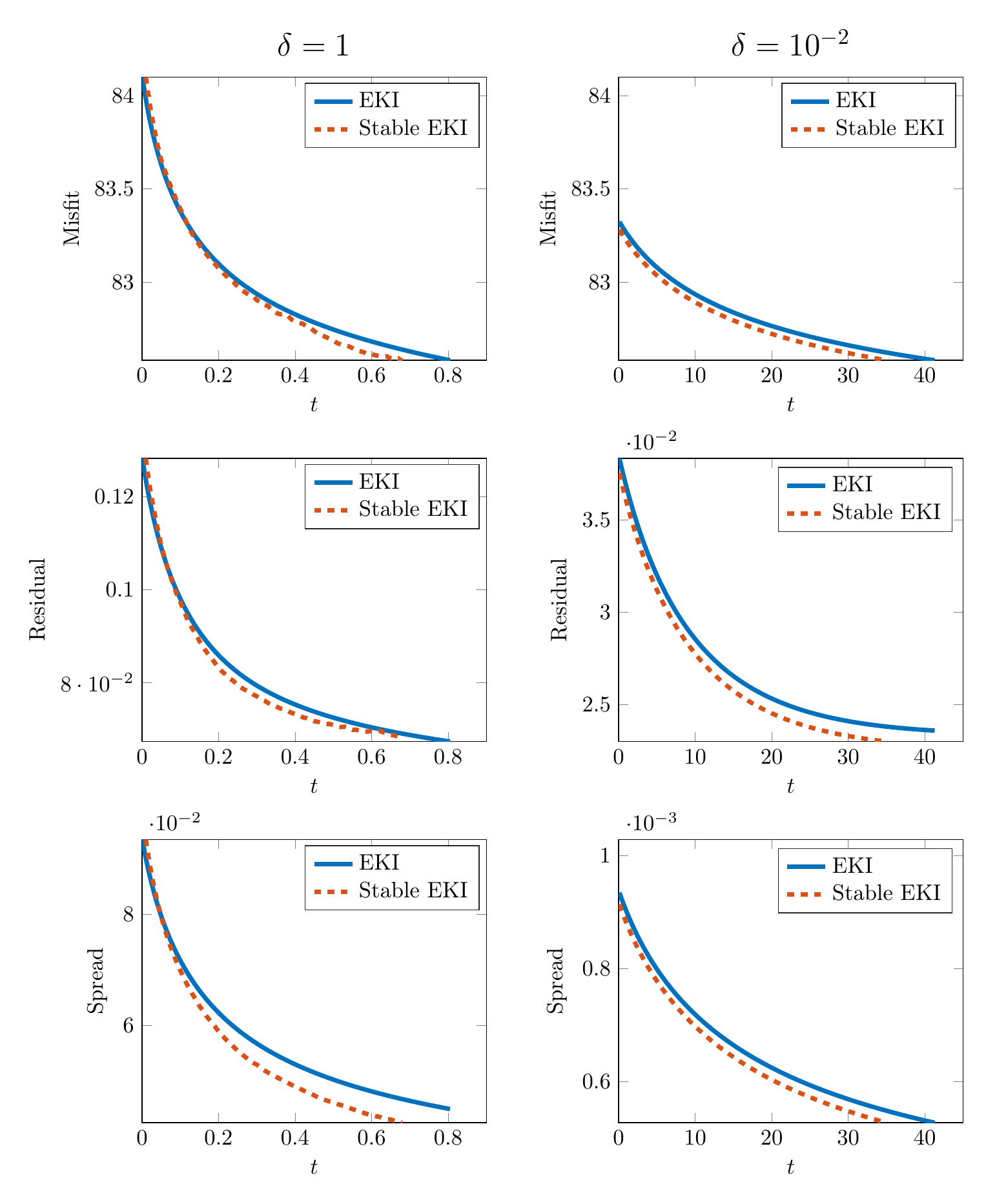}
	\caption{Misfit~\eqref{eq:misfit}, residual~\eqref{eq:residual} and spread~\eqref{eq:spread} behavior in time for the inverse problem of determining the log conductivity 
		$u=\log K$ for~\eqref{eq:groundwater} using the classical ensemble Kalman inversion (EKI)~\eqref{eq:continuousEnKF1} and the stabilized ensemble Kalman inversion (Stable EKI)~\eqref{eq:stableEnKF}. Left column: both methods converge for well chosen initial covariance ($\delta=1$). \rrevision{Right column represents the case of the overly confident prior, with $\delta=10^{-2}$, up to the time when the discrepancy principle is met}.\label{fig:analysis}}
\end{figure}


\begin{figure}[!t]
	\centering
	\includegraphics[width=\textwidth]{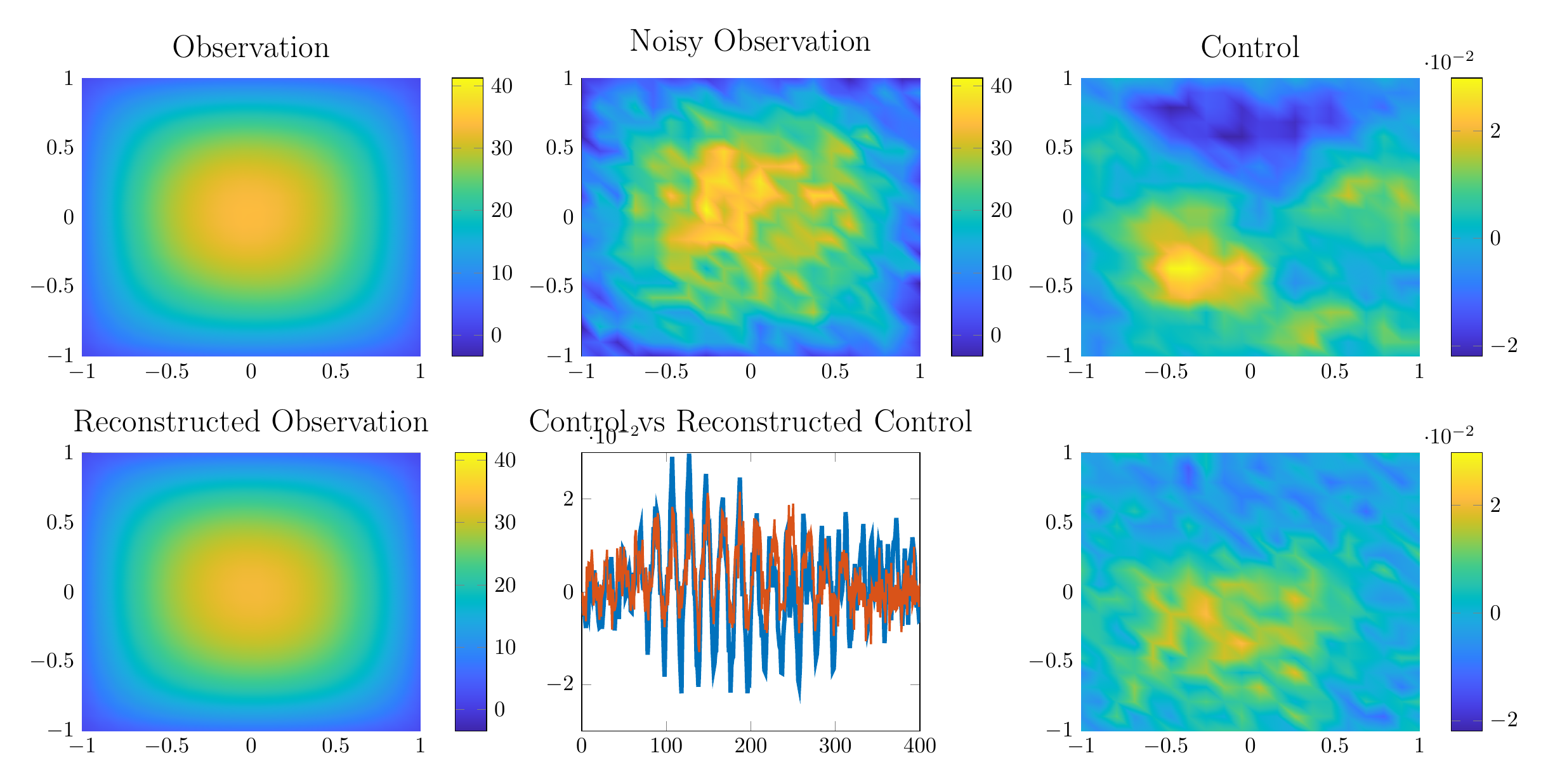}
	\caption{Inverse problem of determining the log conductivity $u=\log K$ for the two--dimensional groundwater equation~\eqref{eq:groundwater} on a $20\times 20$ grid, and solved by the stabilized ensemble Kalman inversion method~\eqref{eq:stableEnKF} with $\alpha=0.1$ and $\beta=-10$. From top right: discrete observations of the true solution $p$; true observations perturbed by Gaussian noise; discrete true log conductivity $u$; solution computed with the identified unknown; one--dimensional plot of the discrete true and reconstructed log conductivity; discrete reconstructed log conductivity\label{fig:resultsStableLargeCovariance}}
\end{figure}

In Figure~\ref{fig:analysis} we show the time behavior of the misfit~\eqref{eq:misfit} (top row), of the residual~\eqref{eq:residual} (middle row) and of the spread to the mean~\eqref{eq:spread} (bottom row) provided by the two methods. The results in the left panels are obtained with $\delta=1$, so that the initial ensemble is sampled from the same prior distribution of the exact control. \revision{The results in the right panels are computed with $\delta=10^{-2}$, which mimics the situation of an overly confident prior, and hence with the covariance close to the border of the feasible region, at two different final times.}

We observe that, if the distribution of the initial ensemble is properly chosen, i.e.~when $\delta=1$, the two methods meet the discrepancy principle \rrevision{at time $T_\text{fin}=1$}, and the misfit, the residual and the ensemble spread \revision{monotonically} decrease in time. \revision{However, while the limit of the classical EKI meets the stopping criterion at time $t\approx 0.8$, the stabilized version of the method stops at a time $t\approx 0.68$. Therefore, in this example the stabilization allows us to save about 15\% of the computational cost.}
The difference between the two methods can be further appreciated when the initial guess of the ensemble is not properly chosen, i.e.~when $\delta=10^{-2}$. This is relevant in applications, where the distribution of the unknown control is not known, therefore the ensemble cannot be suitably initialized leading to a possible change in the length of the transient.
\rrevision{If we stop the simulation at $T_\text{fin}=1$, as the Bayesian perspective suggests, both methods do not meet the discrepancy principle and one needs to run them for a longer time. Also in this example the stabilization allows us to save about 15\% of the computational cost.}

The effect on the performance of the methods when considering \rrevision{$\delta=1$ can be also observed in Figure~\ref{fig:resultsStableLargeCovariance} for the stabilized version of the method.} The top--row panels show, from left to right, the true solution $p$ of~\eqref{eq:groundwater} evaluated on a $20\times 20$ uniform grid, the perturbed solution by additive Gaussian noise, and the a--priori artificially assigned true log conductivity $u$, i.e.~the control in this example, which provides the solution $p$ and we aim to identify. The bottom--row panels, instead, show the solution obtained with the reconstructed log conductivity, and the identified control itself using both a one--dimensional and a two--dimensional visualization. In these figures we appreciate the \rrevision{good performance of the stabilized method which} is able to provide a good identification of the unknown control.

\section{Discussion and conclusions} \label{sec:conclusion}

\revision{The important point of this manuscript is the observation that the EKI leads to structurally unstable dynamical moment systems. This has technical aspects, like the fact that the target equilibrium is unstable from the unfeasible side of the phase space and that it is, while stable, approached only very slowly in time. However, this is also conceptually very important: Typically mathematical models that lead to structurally unstable dynamical systems are flawed -- the modeling part missed important aspects making the analysis of the resulting dynamical system highly susceptible to small noise and small variations in the model. A classical example is provided by chemical reactor modeling, see~\cite{Serra99}.}

\revision{Conventional wisdom is that the modeling leading to the structurally unstable system should be re-examined and it should be determined whether there are legitimate reasons (like in the case of Hamiltonian systems or systems with inherent symmetries) for the structural instability or whether the modeling process lead to the structurally unstable result.}

\revision{Due to the appeal of the EKI it is important to understand where the conceptual issue of a structurally unstable dynamical systems comes from. We have done that in this paper and shown that by computing the continuum limit of the classical EKI, terms are lost that lead to the structural instability. We also show how to make the EKI structurally stable. The stabilization relies on a suitable inflation of the covariance operator which makes the target equilibrium globally asymptotically stable, and thus approached exponentially fast in time.} The numerical results illustrate that the stabilized method is able to provide fast convergence to the solution, independently of the choice of the distribution for the initial ensemble.

\section*{Acknowledgments}

The authors thank the Deutsche Forschungsgemeinschaft (DFG, German Re-
search Foundation) for the financial support through 20021702/GRK2326, 333849990/IRTG-2379,
HE5386/15,18-1,19-1,22-1,23-1 and under Germany’s Excellence Strategy EXC-2023 Internet of Produc-
tion 390621612. The funding through HIDSS-004 is acknowledged.

This work has been initiated during the workshop ``Stochastic dynamics for complex networks and
systems'' hosted at the University of Mannheim in 2019. The workshop was financially supported
by the DAAD exchange project PPP USA 2019 (project-id 57444394).

G.V.~is member of the ``National Group for Scientific Computation (GNCS-INDAM)'' and acknowledges support by MUR (Ministry of University and Research)
PRIN2017 project number 2017KKJP4X.

\bibliographystyle{plain}
\bibliography{references}
\end{document}